\documentclass[11pt]{article}
\usepackage{amsmath, amssymb, amsthm}
\usepackage{verbatim}
\usepackage{multicol}
\usepackage{enumerate}
\usepackage{comment}
\usepackage[none]{hyphenat}
\usepackage{hyperref}
\hypersetup{
	colorlinks=true,
	linkcolor=blue,
	filecolor=magenta,
	urlcolor=cyan,
	citecolor=blue
}
\usepackage{graphicx}
\usepackage{tikz}
\usetikzlibrary{positioning,arrows,shapes,decorations.markings,decorations.pathreplacing,matrix,patterns}
\tikzstyle{vertex}=[circle,draw=black,fill=black,inner sep=0,minimum size=3pt,text=white,font=\footnotesize]

\date{}
\title{\vspace{-0.8cm}Erd\H os-Hajnal-type results for ordered paths}
\author{
	J\'{a}nos Pach \thanks{R\'enyi Institute, Budapest, \emph{e-mail}: \textbf{pach@cims.nyu.edu}} \thanks{IST Austria, Vienna, partially supported by Austrian Science Fund (FWF), grant Z 342-N31.}
\thanks{MIPT, Moscow, partially supported by the Ministry of Education and Science of the Russian Federation in the framework of MegaGrant no 075-15-2019-1926.}
	\and
	Istv\'{a}n Tomon \footnotemark[3] \thanks{ETH Zurich,  Research partially supported by Swiss National Science Foundation grants no. 200020-175573. \emph{e-mail}: \textbf{istvan.tomon@math.ethz.ch}}	
}

\oddsidemargin  0pt
\evensidemargin 0pt
\marginparwidth 40pt
\marginparsep 10pt
\topmargin 10pt
\headsep 10pt
\textheight 8.7in
\textwidth 6.6in

\theoremstyle{plain}
\newtheorem{theorem}{Theorem}
\newtheorem{definition}[theorem]{Definition}

\newtheorem{claim}[theorem]{Claim}
\newtheorem{lemma}[theorem]{Lemma}

\theoremstyle{definition}

\newcommand{\subs}{\subseteq}

\begin{document}

\maketitle
\sloppy

\begin{abstract}
An {\em ordered graph} is a graph with a linear ordering on its vertex set. We prove that for every positive integer $k$, there exists a constant $c_k>0$ such that any ordered graph $G$ on $n$ vertices with the property that neither $G$ nor its complement contains an induced monotone path of size $k$, has either a clique or an independent set of size at least $n^{c_k}$. This strengthens a result of Bousquet, Lagoutte, and Thomass\'e, who proved the analogous result for unordered graphs. 	
\smallskip

A key idea of the above paper was to show that any unordered graph on $n$ vertices that does not contain an induced path of size $k$, and whose maximum degree is at most $c(k)n$ for some small $c(k)>0$, contains two disjoint linear size subsets with no edge between them. This approach fails for ordered graphs, because the analogous statement is false for $k\geq 3$, by a construction of Fox. We provide further examples how this statement fails for ordered graphs avoiding other ordered trees as well.
\end{abstract}

\section{Introduction}

Erd\H{o}s and Hajnal \cite{EH89} proved that graphs avoiding some fixed induced subgraph or subgraphs have very favorable Ramsey-theoretic properties. In particular, they contain surprisingly large homogeneous (that is, complete or empty) subgraphs and bipartite subgraphs. According to the celebrated Erd\H{o}s-Hajnal conjecture, every graph $G$ on $n$ vertices which does not contain some fixed graph $H$ as an induced subgraph, has a clique or an independent set of size at least $n^{c}$, where $c=c(H)>0$ is a constant that depends only on $H$. There is a rapidly growing body of literature studying this conjecture (see, e.g., \cite{APPRS05, APS01, BLT15, C14, CSSS18, EHP00, FP08, FS09,SSS20}).
\smallskip

For any graph $G$ and any disjoint subsets  $A, B\subset V(G)$, we say that {\em $A$ is complete to $B$} if $ab\in E(G)$ for every $a\in A, b\in B$. If $|A|=|B|=k$ and $A$ is complete to $B$, then $A$ and $B$ are said to form a {\em bi-clique of size $k$}. Denote the maximum degree of the vertices in $G$ by $\Delta(G)$. Following~\cite{FP08}, a family of graphs $\mathcal{G}$ is said to have the \emph{Erd\H{o}s-Hajnal} property if there exists a constant $c=c(\mathcal{G})>0$ such that every $G\in\mathcal{G}$ has either a clique or an independent set of size at least $|V(G)|^{c}$. The family $\mathcal{G}$ has the \emph{strong Erd\H{o}s-Hajnal property} if there exists a constant $b=b(\mathcal{G})>0$ such that for every $G\in\mathcal{G}$, either $G$ or its complement $\overline{G}$ has a bi-clique of size $b|V(G)|$. It was proved in \cite{APPRS05} that if a \emph{hereditary} family (that is, a family closed under taking induced subgraphs) has the strong Erd\H{o}s-Hajnal property, then it also has the Erd\H{o}s-Hajnal property.
\smallskip

The aim of this paper is to discuss Erd\H os-Hajnal type problems for ordered graphs. An \emph{ordered graph} is a graph with a total ordering on its vertex set. With a slight abuse of notation, in every ordered graph, we denote this ordering by $\prec$. If the vertex set of $G$ is a subset of the integers, then $\prec$ stands for the natural ordering. An ordered graph $H$ is an \emph{ordered subgraph} (or simply subgraph) of $G$ if there exists an order preserving embedding from $V(H)$ to $V(G)$ that maps edges to edges. If, in addition, non-edges are mapped into non-edges, then $H$ is called an {\em induced} ordered subgraph of $G$. If $G$ does not have $H$ as induced ordered subgraph, then we say that $G$ {\em avoids} $H$. The ordered path with vertices $1,\dots,k$ and edges $\{i,i+1\}$, for $i=1,\dots,k-1$, is called a \emph{monotone path of size $k$}.

Our main result is the following.

\begin{theorem}\label{thm:mainthm}
For any positive integer $k$, there exists $c=c(k)>0$ with the following property. If $G$ is an ordered graph on $n$ vertices such that neither $G$ nor its complement contains an induced monotone path of size $k$, then $G$ has either a clique or an independent set of size at least $n^{c}$.
\end{theorem}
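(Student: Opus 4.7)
I would argue by induction on $k$, with the trivial base case $k \le 2$ (avoiding induced $P_2$ forces $G$ to be edgeless, giving an independent set of size $n$). Assume the theorem for $k-1$, and let $G$ be an ordered graph on $n$ vertices in which neither $G$ nor $\bar{G}$ contains an induced monotone $P_k$. For each $v \in V(G)$, let $a(v)$ and $a'(v)$ denote the sizes of the longest induced monotone paths in $G$ that end at $v$ and start at $v$ respectively, and define $b(v), b'(v)$ analogously in $\bar{G}$. All four values lie in $\{1,\dots,k-1\}$, so the profile $(a(v), a'(v), b(v), b'(v))$ partitions $V(G)$ into at most $(k-1)^4$ classes; by pigeonhole some class $W$ has size at least $n/(k-1)^4$.

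The key observation driving the induction is that any induced monotone path of size $k-1$ in $G[W]$ has first vertex $u_1$ with $a'(u_1)\ge k-1$ and last vertex $u_{k-1}$ with $a(u_{k-1})\ge k-1$, and analogously for $\overline{G[W]}$ with $b, b'$. Hence, if the common profile of $W$ satisfies $a, a', b, b' \le k-2$, then both $G[W]$ and $\overline{G[W]}$ avoid induced monotone $P_{k-1}$, and the inductive hypothesis applied to $G[W]$ produces a clique or independent set of size at least $|W|^{c_{k-1}} \ge n^{c_{k-1}}/(k-1)^{4c_{k-1}}$, which exceeds $n^{c_k}$ once $c_k$ is chosen slightly smaller than $c_{k-1}$ and $n$ is large enough; for small $n$ one takes a single vertex.

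The main obstacle is the complementary case where the common profile saturates in at least one coordinate; by symmetry assume $a(v) = k-1$ for every $v \in W$, so each $v \in W$ carries a witness induced monotone path $\pi^v = (\pi_1^v \prec \cdots \prec \pi_{k-2}^v \prec \pi_{k-1}^v = v)$ in $G$. Note that if $v \prec v'$ are in $W$ with $vv' \in E$ and $\pi_i^v v' \notin E$ for all $i \le k-2$, then $\pi_1^v, \dots, \pi_{k-2}^v, v, v'$ is an induced monotone $P_k$ in $G$, contradicting the hypothesis. The plan here is to refine $W$ by a secondary pigeonhole on the \emph{shape} of these witnesses — for example, on the profiles or block-positions of the interior vertices $\pi_1^v, \dots, \pi_{k-2}^v$ with respect to a coarse partition of $V(G)$ — aiming to find a subclass $W' \subseteq W$ whose witnesses are so compatible that either every edge $vv'$ with $v \prec v'$ in $W'$ allows the above extension (forcing $W'$ to be an independent set, which we output), or the forbidden configurations themselves organize $W'$ into a large clique-like structure.

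The principal technical difficulty is keeping such an extension induced: a priori, $v'$ may be adjacent to an interior vertex $\pi_j^v$ of the witness, and then the naive appending fails. Controlling these internal edges — either by refining the pigeonhole enough that the interior vertices lie in a zone disjoint from the neighborhood of $v'$, or by choosing the witnesses $\pi^v$ flexibly within the family of all induced $P_{k-1}$'s ending at $v$ so as to dodge obstructions — is where I expect the real work to live. I anticipate that closing this case cleanly will require a double induction (on $k$ and on the number of saturated profile coordinates), reducing in each step either to a smaller $k$ or to a strictly smaller number of saturated coordinates on a still linearly-sized subclass, until the good case applies.
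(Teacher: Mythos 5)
Your proposal is not a proof: the case that you yourself identify as ``where the real work lives'' is exactly the hard case, and nothing in the sketch resolves it. The pigeonhole on profiles $(a,a',b,b')$ and the easy case (all coordinates at most $k-2$, so $G[W]$ and $\overline{G}[W]$ avoid the monotone path of size $k-1$ and induction applies) are fine. But when a coordinate saturates, say $a(v)=k-1$ for every $v\in W$, all you extract is that for every forward edge $vv'$ inside $W$ the vertex $v'$ must be adjacent to an interior vertex of every induced monotone $(k-1)$-path ending at $v$. You give no mechanism that converts this adjacency constraint into a clique or independent set of polynomial size, no concrete invariant that strictly decreases in the proposed ``double induction'' (the witness paths live outside $W$, so passing to a subclass $W'$ does not reduce $k$ or de-saturate the profile, which is computed in all of $G$), and no argument ruling out that the interior vertices of the witnesses interact with $W$ in an essentially arbitrary way. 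So the proposal currently reduces the theorem to an unproved statement at least as hard as the theorem itself.

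There is also a structural reason to doubt that a purely local refinement of this kind can be pushed through: the analogous ``connected/lopsided'' arguments that work for unordered paths (Bousquet--Lagoutte--Thomass\'e) provably fail in the ordered setting, by Fox's construction of sparse ordered graphs with no induced monotone path of size $3$ and no linear bi-clique in the complement. The paper circumvents this by a very different route: R\"odl's theorem reduces to the case where $G$ (or its complement) has maximum degree at most $\epsilon n$; then a quasi-Erd\H{o}s--Hajnal statement (Theorem~\ref{thm:qEH}) is proved for sparse ordered graphs with no induced monotone path, producing $t\geq 2$ pairwise anticomplete sets $X_1,\dots,X_t$ with $t\geq\alpha(n/|X_i|)^{1/2}$, via an algorithmic embedding lemma for bipartite graphs combined with transitive closures (the induced path is grown vertex by vertex, maintaining a large forward degree in the transitive closure of a shrinking subgraph); finally, quasi-EH implies EH for hereditary families. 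Your plan has no counterpart to this sparse-case machinery, and the saturated-profile case you leave open is precisely where that machinery is needed.
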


Our theorem obviously implies the analogous statement for unordered graphs, which was first established by Bousquet, Lagoutte, and Thomass\'e \cite{BLT15}. The idea of their proof was the following. We call a family of graphs, $\mathcal{H}$,  \emph{lopsided} if there exists a constant $c=c(\mathcal{H})>0$ with the following property: any graph $G$ on $n$ vertices which does not contain any element of $\mathcal{H}$ as an induced subgraph, and for which $\Delta(G)<cn$, the complement of $G$ has a bi-clique of size at least $cn$. If $\mathcal H$ consists of a single graph $H$, then $H$ is called lopsided. They proved that the (unordered) path of size $k$ is lopsided. It follows from the arguments of Bousquet {\em et al.} that if $\mathcal H$ is lopsided, then the family of all graphs which avoid every element of $\mathcal H$ as an induced subgraph, and whose complements also avoid them, has the strong Erd\H os-Hajnal and, thus, the Erd\H os-Hajnal property.

Since then, this idea has been exploited to prove the Erd\H{o}s-Hajnal conjecture for various other families of graphs: the family of graphs avoiding a tree $T$ and its complement~\cite{CSSS18}, the family of graphs avoiding all subdivisions of a graph $H$ and the complements of these subdivisions~\cite{CSSS18+}, the family of graphs avoiding a graph $H$ as a vertex minor~\cite{CO18},  families of graphs avoiding a fixed cycle as a pivot minor \cite{KO20}, etc.
\smallskip

However, for ordered graphs, this method does not work even in the simplest case: for monotone paths. A construction of Fox \cite{F06} shows that, for every $n$ and $\delta>0$, there exists an ordered graph $G$ with $|V(G)|=n$ and $\Delta(G)<n^{\delta}$ which avoids the monotone path of size $3$, and whose complement does not contain a bi-clique of size larger than $\frac{cn}{\log n}$, for a suitable constant $c=c(\delta)>0$. Hence, using the above terminology, the monotone path of size at least 3 is {\em not lopsided}.

Although monotone paths are not lopsided, they satisfy a somewhat weaker property, as is shown by the following theorem of the authors.

\begin{theorem}\label{thm:regi}(\cite{PT19})
For any positive integer $k$, there exists a constant $c=c(k)>0$ with the following property. If $G$ is an ordered graph on $n$ vertices that does not contain an induced monotone path of size $k$, and $\Delta(G)<cn$, then the complement of $G$ contains a bi-clique of size at least $\frac{cn}{\log n}$.
\end{theorem}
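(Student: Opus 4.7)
The plan is to prove Theorem \ref{thm:regi} by induction on $k$. The base case $k=2$ is trivial: a graph avoiding an induced monotone path of size $2$ is edgeless, so the two halves of $V(G)$ under $\prec$ form a bi-clique of size $\lfloor n/2 \rfloor$ in $\overline{G}$, far exceeding the required $cn/\log n$.

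For the inductive step from $k-1$ to $k$, assume the theorem with constant $c_{k-1}$. Let $G$ be on $n$ vertices, avoiding an induced monotone $P_k$, with $\Delta(G) < c_k n$ for a suitably small $c_k = c_k(c_{k-1})$. The strategy is a dichotomy based on an interval partition of the ordering. Partition $V(G)$ into $m = \Theta(\log n)$ consecutive intervals $I_1 \prec \cdots \prec I_m$ of equal size $s \approx n/\log n$. If some pair $(I_a, I_b)$ with $a < b$ has no $G$-edges between them, the pair is already a bi-clique in $\overline{G}$ of size $s \approx n/\log n$, and we are done. Otherwise, every pair of intervals carries at least one crossing edge; combined with the degree bound, this is a very restrictive structural condition.

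In this second case, the plan is to reduce to the inductive hypothesis by producing a linear-size subset $S \subseteq V(G)$ such that $G[S]$ avoids an induced monotone $P_{k-1}$ and $\Delta(G[S]) < c_{k-1}|S|$. A natural attempt is to locate, via a double-counting (or dependent random choice) over the cross-edges between intervals, a vertex $w$ together with a subset $S$ of size $\Omega(n)$ for which any hypothetical induced $P_{k-1}$ in $G[S]$ would combine with $w$ (possibly spliced into the path at the correct order-position, not merely appended at the end) into an induced $P_k$ in $G$, contradicting the avoidance hypothesis. Thus $G[S]$ must be $P_{k-1}$-free. A degree-peeling pass removes the few vertices of $S$ whose degree in $G[S]$ exceeds $c_{k-1}|S|$; by Markov this costs only a constant factor in size. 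Applying the inductive hypothesis to the resulting set then produces a bi-clique of size $\Omega(|S|/\log|S|) = \Omega(n/\log n)$ in $\overline{G[S]} \subseteq \overline{G}$.

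The main obstacle is the tightness of the $1/\log n$ factor demonstrated by Fox's construction for $k=3$: the induction cannot afford more than constant-factor losses beyond the single $\log n$ already spent on the interval partition. Hence the subset $S$ must have size $\Omega(n)$, the degree-to-size ratio must remain below $c_{k-1}$, and the $P_{k-1}$-freeness of $G[S]$ must be genuine. The delicate technical point is the extension/splicing argument: because $G$ is \emph{ordered}, the extending vertex $w$ must lie in precisely the right order-position relative to the $P_{k-1}$ for the extended path to be induced, which rules out naive reductions such as ``take the non-neighborhood of $w$''. Exhibiting such a $w$ from the crossing-edge abundance, while simultaneously maintaining the three conditions above, is the technical heart of the argument.
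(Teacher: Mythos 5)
The paper does not actually prove Theorem~\ref{thm:regi}; it quotes it from \cite{PT19}. Judged on its own terms, your plan has a genuine gap at its central step. All the weight rests on producing, in the case where every pair of intervals is joined by an edge, a vertex $w$ and a set $S$ with $|S|=\Omega(n)$ such that \emph{every} induced monotone path of size $k-1$ inside $G[S]$ extends through $w$ to an induced monotone path of size $k$, so that $G[S]$ is $P_{k-1}$-free and the induction applies. You give no construction of $(w,S)$, and in fact no such pair need exist: already for $k=3$ your reduction target reads ``$S$ is an independent set of size $\Omega(n)$,'' and this can fail under your case-2 hypothesis. Take the comparability graph of a disjoint union of chains, ordered by a linear extension: one ``long'' chain with exactly one vertex in each of the $\Theta(\log n)$ intervals, and, inside each interval, a single chain on all its remaining vertices. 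This ordered graph has no induced monotone path of size $3$ (such graphs are exactly transitively closed), its maximum degree is about $n/\log n<cn$, and every pair of intervals is joined by an edge of the long chain, so your second case applies; yet its largest independent set has size $O(\log n)$, so no linear-size $P_2$-free induced subgraph exists. (The theorem itself still holds for this graph --- two intervals minus their long-chain vertices are anticomplete --- so the failure is in the proposed reduction, not the statement.) Note also that your case-2 hypothesis, one crossing edge for each pair of the fixed intervals, involves only $O(\log^2 n)$ edges and is far too weak to feed any dependent-random-choice argument; at the very least one must work with the full contrapositive that no two sets of size about $n/\log n$ are anticomplete.

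A second, smaller error is the ``splicing'' remark: a vertex $w$ lying strictly inside the order-span of an induced $P_{k-1}$ can never produce an induced $P_k$, because the two path vertices that were consecutive remain adjacent but would no longer be consecutive, violating inducedness. So $w$ could only be placed before the first or after the last vertex, and it would have to be adjacent to exactly one endpoint and to no other path vertex --- a condition you cannot impose simultaneously for all paths in a linear-size $S$ by fixing one $w$. This is precisely why the arguments that do work (the proof in \cite{PT19}, and the proof of Theorem~\ref{thm:qEH} in Sections~\ref{sect:monpath2}--\ref{sect:monpath3} of this paper) never look for a large $P_{k-1}$-free induced subgraph: they grow the path one vertex at a time using reachability by monotone paths, i.e.\ the transitive closure and ``good'' paths, together with the device of assigning to each reachable vertex the $\prec$-largest starting vertex, which keeps the path induced without controlling all adjacencies to a single fixed vertex. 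Your plan would need this (or some comparable) mechanism before the induction on $k$ can be run.
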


Unfortunately, Theorem~\ref{thm:mainthm} cannot be deduced from this weaker property. Our approach is based on a technique in \cite{T20}, where it was shown that the family of string graphs has the Erd\H os-Hajnal property.

Recently, Seymour, Scott, and Spirkl \cite{SSS20} extended our Theorem~\ref{thm:regi} from monotone paths to all ordered forests $T$, albeit with a weaker bound $n^{1-o(1)}$ in place of $\frac{cn}{\log n}$. They proved that for any $0<c<1$, there exists $\epsilon=\epsilon(T,c) > 0$ with the following property. If $G$ is an ordered graph on $n$ vertices that does not contain $T$ as an induced ordered subgraph and $\Delta(G)<\epsilon n$, then the complement of $G$ contains a bi-clique of size at least $\epsilon n^{1-c}$. Therefore, if we want to guarantee a bi-clique of size $n^{1-o(1)}$ in $\overline{G}$, we need to assume that the maximum degree of $G$ is $o(n)$. This is definitely a stronger condition than the one we had for monotone paths.

Our next construction shows that this stronger condition is indeed necessary. We also provide new examples of ordered trees $T$ (that do not contain a monotone path of size 3), for which one cannot expect to find linear size bi-cliques.

\begin{theorem}\label{thm:construction}
	For any $\epsilon>0$ there exist $\delta=\delta(\epsilon)>0$ and $n_0=n_0(\epsilon)$ with the following property.
	
	For any positive integer $n\ge n_0$, there is an ordered graph $G$ with $n$ vertices and $\Delta(G)\leq \epsilon n$ such that the size of the largest bi-clique in $\overline{G}$ is at most $n^{1-\delta}$, and $G$ does not contain either of the following ordered trees as an induced ordered subgraph:
	
	\begin{center}
		\begin{tikzpicture}
		\node at (-0.7,0) {$S:$} ;
		\node[vertex,minimum size=3pt] (A) at (0,0) {} ; \node at (0,-0.3) {\scriptsize 1} ;
		\node[vertex,minimum size=3pt] (B) at (1,0) {} ;\node at (1,-0.3) {\scriptsize 2} ;
		\node[vertex,minimum size=3pt] (C) at (2,0) {} ;\node at (2 ,-0.3) {\scriptsize 3} ;
		\node[vertex,minimum size=3pt] (D) at (3,0) {} ; \node at (3,-0.3) {\scriptsize 4} ;
		
		\draw (A) edge[bend left] (B) ; \draw (A) edge[bend left] (C) ; \draw (A) edge[bend left] (D) ;
		
		\node at (7.3,0) {$P:$} ;
		\node[vertex,minimum size=3pt] (A) at (8,0) {} ; \node at (8,-0.3) {\scriptsize 1} ;
		\node[vertex,minimum size=3pt] (B) at (9,0) {} ;\node at (9,-0.3) {\scriptsize 2} ;
		\node[vertex,minimum size=3pt] (C) at (10,0) {} ;\node at (10 ,-0.3) {\scriptsize 3} ;
		\node[vertex,minimum size=3pt] (D) at (11,0) {} ; \node at (11,-0.3) {\scriptsize 4} ;
		\draw (A) edge[bend left] (D) ; \draw (A) edge[bend left] (C) ; \draw (B) edge[bend left] (C) ;
		\end{tikzpicture}
	\end{center}
\end{theorem}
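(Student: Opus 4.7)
I propose constructing $G$ via a lexicographic blow-up of a suitable auxiliary ordered graph. Partition $[n]$ into $t$ consecutive blocks $B_1 \prec \cdots \prec B_t$, each of size $m = n/t$, chosen so that $m \le \epsilon n / 2$. Within each block, place a complete graph $K_m$. Then let $G_0$ be an auxiliary ordered graph on $[t]$, and between blocks $B_i, B_j$ (with $i < j$) place either a complete bipartite graph if $ij \in E(G_0)$ or no edges otherwise. Formally, $G = G_0[K_m]$. A direct calculation gives $\Delta(G) \le (m - 1) + m \Delta(G_0) \le \epsilon n$, provided $\Delta(G_0) \le (\epsilon/2) t$.

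The key properties of this blow-up are verified by case analysis. For a hypothetical induced copy of $S$ in $G$, the pairwise non-edges among $v_2, v_3, v_4$ force them into pairwise distinct blocks (any two in the same block are adjacent via the clique structure); and $v_1$ cannot share a block with any $v_i$ either, since if $i_1 = i_2$ (say), then $v_1 v_3 \in E$ together with $v_2 v_3 \notin E$ would simultaneously require $i_1 i_3 \in E(G_0)$ and $i_2 i_3 \notin E(G_0)$, a contradiction. Hence the four vertices lie in four distinct blocks, and their induced structure coincides with an induced copy of $S$ in $G_0$. An analogous argument handles $P$, so $G$ is $S, P$-free iff $G_0$ is. Similarly, any bi-clique $(A, B)$ in $\overline{G}$ forces $A$ and $B$ to meet disjoint sets of blocks $I_A, I_B \subseteq [t]$ (since otherwise a within-block clique edge would appear between $A$ and $B$), and these sets themselves form a bi-clique in $\overline{G_0}$. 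Therefore $\min(|A|, |B|) \le m \cdot \min(|I_A|, |I_B|) \le m \cdot t^{1-\delta_0}$, provided $G_0$ admits bi-cliques in its complement of size no larger than $t^{1-\delta_0}$.

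The remaining task---which is also the main obstacle---is the construction of $G_0$ on $t$ vertices satisfying: $G_0$ is $S, P$-free, $\Delta(G_0) \le (\epsilon/2) t$, and bi-cliques in $\overline{G_0}$ have size at most $t^{1-\delta_0}$ for some $\delta_0 = \delta_0(\epsilon) > 0$. This is a version of the same theorem on $t$ vertices; taking $t = n^\alpha$ for an appropriate $\alpha > 0$ and iterating the reduction reduces matters to a base instance on few vertices, where $G_0$ can be constructed directly---for instance, via an explicit algebraic graph, or via a random graph combined with a deletion/alteration argument that removes forbidden induced patterns, in the spirit of Fox's construction \cite{F06} for the monotone-$P_3$ case. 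Unlike Fox's setting, where only a single ordered pattern must be managed, here $S$-avoidance forces every right-neighborhood to have independence number at most $2$ while $P$-avoidance imposes additional subtler conditions on mid-range adjacencies; reconciling both of these constraints while maintaining a polynomial bi-clique bound in the complement is the crux of the argument.
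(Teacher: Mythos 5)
There is a genuine gap, and it is exactly at the point you yourself flag as ``the crux''. Your lexicographic blow-up $G=G_0[K_m]$ and the accompanying case analysis (lifting induced copies of $S$ and $P$, and of complementary bi-cliques, to the auxiliary graph $G_0$) are fine, but they only restate the theorem: you now need an ordered graph $G_0$ on $t$ vertices with small maximum degree, no induced $S$ or $P$, and no bi-clique of size $t^{1-\delta_0}$ in its complement, which is the same statement you set out to prove. The proposed escape---take $t=n^{\alpha}$ and iterate the reduction down to a base instance of bounded size---cannot work, for a quantitative reason: each blow-up by blocks of size $m$ multiplies both the number of vertices and the complementary bi-clique bound by $m$, so the ratio (bi-clique bound)$/$(number of vertices) never drops below its value on the base instance. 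Equivalently, to get the bound $n^{1-\delta}$ for $G$ you need $\beta(\overline{G_0})\le t^{1-\delta/\alpha}$, i.e.\ the required saving exponent grows by a factor $1/\alpha>1$ at every level of the recursion; by the time the base has constant size the requirement exceeds exponent $1$, which is impossible for any non-complete graph (and completeness is excluded by the degree condition). So a constant-size base can only ever yield a bound of the form $cn$, never $n^{1-\delta}$, and the whole polynomial saving must be produced by an explicit construction at scale $n^{\Theta(1)}$---which your proposal does not supply. The appeal to ``an explicit algebraic graph, or a random graph with alteration'' is not a proof, and a random graph is a poor candidate here: avoiding a fixed induced ordered subgraph (e.g.\ $S$ forces every forward neighbourhood to have independence number at most $2$) is a rigid structural property that a deletion argument will not restore while preserving the polynomial bi-clique bound.

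For comparison, the paper's construction supplies exactly this missing ingredient without any recursion: it splits the vertex set into $k=2/\epsilon$ ordered classes $A_1\prec\dots\prec A_k$ of size $m=n/k$, identifies each class with the vertex set of one fixed $(m,3,\lambda_0)$-expander $H$, and joins $x\in A_a$ to $y\in A_b$ (for $a<b$) precisely when their images are at distance at most $f2^{a-1}$ in $H$, with $f\approx \log_2 n/(4\cdot 2^{k})$; classes are made cliques internally. The geometrically growing distance scales give the triangle-completion property (if $a<b<c$ and $x$ is close to both $y$ and $z$, then $y$ and $z$ are close at the scale used between $A_b$ and $A_c$), which is what kills induced copies of $S$ and $P$; the expansion of $H$ gives $|X||Y|\le m^2(1+\lambda_0)^{-f}$ for any cross pair with no edge, hence the $n^{1-\delta}$ bi-clique bound; and bounding degrees in powers of a $3$-regular graph keeps $\Delta(G)\le n/k+4^{f2^{k}}\le\epsilon n$. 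Some device of this kind---a single-scale-hierarchy or pseudorandom construction providing a polynomial anticomplete-pair bound directly---is what your argument still needs.
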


\medskip

The investigation of bipartite variants of the problems considered in this paper were initiated in~\cite{KPT19}; see also \cite{ATW19, SS20}.

Our paper is organized as follows. In Section~\ref{sect:monpath1}, we introduce the key concept needed for the proof of Theorem~\ref{thm:mainthm} and reduce Theorem~\ref{thm:mainthm} to another statement (Theorem~\ref{thm:qEH}). Sections~\ref{sect:monpath2} and~\ref{sect:monpath3} are devoted to the proof of this latter statement. The construction proving Theorem~\ref{thm:construction} will be presented in Sections~\ref{constructionsection}.
\smallskip

Throughout this paper, we use the following notation, which is mostly conventional. For any graph $G$ and any subset $U\subset V(G)$, we denote by $G[U]$ the subgraph of $G$ induced by $U$. The {\em neighborhood} of $U$ is defined as $N_{G}(U)=N(U)=\{v\in V(G)\setminus U:\exists u\in U, uv\in E(G)\}$. If $U=\{u\}$, instead of $N(U)$, we simply write $N(u)$. For a vertex $v\in V(G)$, let $G-v$ stand for the graph obtained from $G$ by deleting the vertex $v$. Also, if $G$ is an ordered graph, the \emph{forward neighbourhood} of a vertex $v\in V(G)$, denoted by $N^{+}_{G}(y)=N^{+}(y)$ is the set of neighbours $y$ such that $x\prec y$.  

For easier readability, we omit the use of floors and ceilings, whenever they are not crucial.

\section{The quasi-Erd\H{o}s-Hajnal property}\label{sect:monpath1}

After introducing some notation and terminology, we outline our proof strategy for Theorem \ref{thm:mainthm}.

For any $k\ge 3$, let $\mathcal{P}_k$ denote the family of all ordered graphs $G$ such that neither $G$ nor its complement contains a monotone path of size $k$ as an induced subgraph. Instead of proving that $\mathcal{P}_k$ has the Erd\H{o}s-Hajnal property, we prove that it has the \emph{quasi-Erd\H{o}s-Hajnal property}. This concept was introduced by the second named author in~\cite{T20}, in order to show that the family of string graphs has the Erd\H{o}s-Hajnal property.

\begin{definition}\label{elso}
	A family of graphs, $\mathcal{G}$, has the \emph{quasi-Erd\H{o}s-Hajnal property} if there is a constant $c=c(\mathcal G)>0$ with the following property. For every $G\in \mathcal{G}$ with at least 2 vertices, there exist $t\geq 2$ and $t$ disjoint subsets $X_1,\dots,X_t\subset V(G)$ such that $t\geq (\frac{|V(G)|}{|X_i|})^c$ for $i=1,\dots,t$, and 
	\begin{description}
		\item[(i)] either there is no edge between $X_i$ and $X_j$ for $1\leq i<j\leq t$, 
		\item[(ii)] or $X_i$ is complete to $X_j$ for $1\leq i<j\leq t$.
	\end{description}
\end{definition}
\smallskip

It was proved in \cite{T20} that in hereditary families, the quasi-Erd\H{o}s-Hajnal property is equivalent to the Erd\H{o}s-Hajnal property. We somewhat relax the definition of the quasi-Erd\H{o}s-Hajnal property, and with a slight abuse of notation, we overwrite the previous definition as follows.

\begin{definition}\label{masodik}
	A family of graphs, $\mathcal{G}$, has the \emph{quasi-Erd\H{o}s-Hajnal property} if there are two constants, $\alpha,\beta>0,$ with the following property. For every $G\in \mathcal{G}$ with at least 2 vertices, there exist $t\geq 2$ and $t$ disjoint subsets $X_1,\dots,X_t\subset V(G)$ such that $t\geq \alpha(\frac{|V(G)|}{|X_i|})^\beta$ for $i=1,\dots,t$, and 
	\begin{description}
		\item[(i)] either there is no edge between $X_i$ and $X_j$ for $1\leq i<j\leq t$,
		\item[(ii)] or $X_i$ is complete to $X_j$ for $1\leq i<j\leq t$.
	\end{description}
\end{definition}
\smallskip

It is easy to verify that the two definitions are in fact equivalent. If $\mathcal G$ satisfies Definition~\ref{elso}, then, obviously, it also satisfies Definition~\ref{masodik}. In the reverse direction, setting $c=\frac{\beta}{1-\log_2\alpha}$ if $\alpha\leq 1$, and $c=\beta$ if $\alpha>1$, if the inequality  $t\geq \alpha(\frac{|V(G)|}{|X_i|})^\beta$ holds for some $t\ge 2$, then we also have $t\ge (\frac{|V(G)|}{|X_i|})^c$.
\smallskip

Therefore, it is enough to show that $\mathcal{P}_k$ has the quasi-Erd\H{o}s-Hajnal property. The advantage of the quasi-Erd\H{o}s-Hajnal property compared to the Erd\H{o}s-Hajnal property is that it allows us to establish the following lopsided statement, which will imply Theorem \ref{thm:mainthm}.

\begin{theorem}\label{thm:qEH}
For every positive integer $k$, there exist two constants $\epsilon,\alpha>0$ with the  following property.
    
Let $G$ be an ordered graph on $n$ vertices with maximum degree at most $\epsilon n$ such that $G$ does not contain a monotone path of size $k$ as an induced subgraph. Then there exist $t\geq 2$ and $t$ disjoint subsets $X_1,\dots,X_t\subset V(G)$ such that $t\geq \alpha(\frac{n}{|X_i|})^{1/2}$ and there is no edge between $X_i$ and $X_j$ for $1\leq i<j\leq t$.
\end{theorem}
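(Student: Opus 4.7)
My plan is to prove Theorem~\ref{thm:qEH} by induction on $n$, with Theorem~\ref{thm:regi} serving as the inductive engine. The base case ($n$ below some threshold $n_0$) is handled trivially by taking two non-adjacent singletons, which exist because $\Delta(G)\le \epsilon n$ forces many non-edges. For the inductive step, I would apply Theorem~\ref{thm:regi} to $G$ to obtain disjoint sets $A,B\subset V(G)$ with $|A|=|B|\ge cn/\log n$ and no edges between them in $G$. The idea is to keep $B$ as one part of the output and then recursively refine $A$ into $s$ pairwise non-adjacent subsets $Y_1,\ldots,Y_s$; because each $Y_i\subseteq A$ and $A,B$ have no edges between them, $\{B,Y_1,\ldots,Y_s\}$ automatically gives $s+1$ mutually non-adjacent parts, which would be the output.

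\emph{The main obstacle} is propagating the max-degree hypothesis into the recursion: the inherited bound $\Delta(G[A])\le \epsilon n$ is far too weak compared to $\epsilon|A|\approx \epsilon c n/\log n$, so neither Theorem~\ref{thm:regi} nor the induction hypothesis applies to $G[A]$ out of the box. I would handle this by first cleaning $A$ to a subset $A'\subseteq A$ on which $\Delta(G[A'])\le \epsilon'|A'|$ for a larger but still fixed constant $\epsilon'$. Using the global edge bound $e(G)\le \tfrac12\epsilon n^2$ together with a Markov-style averaging, a constant fraction of $A$ survives this pruning, at the cost of loosening the degree constant by a factor of order $n/|A|=\Theta(\log n)$ per level.

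In the final accounting, the recursive call on $G[A']$ gives $s\ge \alpha(|A'|/|Y_j|)^{1/2}$, while the target is $s+1\ge \alpha(n/|Y_j|)^{1/2}$; the gap is the factor $\sqrt{n/|A'|}=\Theta(\sqrt{\log n})$. The exponent $\tfrac12$ in the conclusion is crucial here: by spreading the refinement over $\Theta(\log n/\log\log n)$ levels and strengthening the inductive statement to couple the allowed max-degree parameter $\epsilon$ and the output constant $\alpha$ across levels, the cumulative per-level logarithmic losses should be absorbed by the per-level growth of $t$. Making this amortization close uniformly in $n$, and verifying that the degradation of $\epsilon'$ across levels stays within a regime where Theorem~\ref{thm:regi} still applies, is the technical heart of the argument and where I anticipate the principal difficulty.
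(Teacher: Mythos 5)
There is a genuine gap, and it sits exactly where you place the ``technical heart'': the relative-degree hypothesis cannot be restored on the side $A$ you recurse into. After applying Theorem~\ref{thm:regi} you have $|A|\approx cn/\log n$, which for large $n$ is below $\epsilon n$, so the hypothesis $\Delta(G)\le\epsilon n$ says \emph{nothing} about $G[A]$: for instance $G[A]$ may be a complete graph (a clique contains no induced monotone path of size $k\ge 3$, so this is consistent with all assumptions). In that case no pruning helps --- every $A'\subseteq A$ with $|A'|\ge 2$ has $\Delta(G[A'])=|A'|-1$, so you never reach $\Delta(G[A'])\le\epsilon'|A'|$ with $\epsilon'<1$, and $A$ contains no two disjoint anticomplete subsets at all, so the recursion returns nothing and you are left with only $t=2$ and parts of size $\Theta(n/\log n)$, which violates $t\ge\alpha(n/|X_i|)^{1/2}$ for large $n$. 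Your own Markov-style cleaning concedes a loss of $\Theta(\log n)$ in the degree parameter per level; after a single level the parameter exceeds $1$, at which point both Theorem~\ref{thm:regi} and any inductive hypothesis of this shape are vacuously inapplicable. Moreover, even granting the recursion, the bookkeeping does not close: each level contributes only one additional part ($B$) while part sizes shrink by a factor $\Theta(\log n)$, so after $L$ levels $t=O(L)$, whereas the deepest parts require $t\ge\alpha(\log n)^{\Omega(L)}$ and already the first-level part $B$ alone requires $t\gtrsim\sqrt{\log n}$, which the recursive call need not supply. This is precisely why the paper states that Theorem~\ref{thm:mainthm} cannot be deduced from the weaker property expressed by Theorem~\ref{thm:regi}.

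The paper's actual argument avoids any recursion on shrunken vertex sets. It assumes, for contradiction, that no family $X_1,\dots,X_t$ as in the conclusion exists, and then \emph{constructs} an induced monotone path $x_1\prec\dots\prec x_k$ in $G$, one vertex at a time. The engine is Lemma~\ref{lemma:embedding}, a bipartite structure lemma (proved by an algorithmic classification of vertices by degree scale), which is applied not to $G$ itself but to reachability graphs derived from the \emph{transitive closure} of $G$ restricted to the surviving vertex set $U_s$: at each step, either the lemma's outputs (i) or (ii) would yield the forbidden anticomplete family in $G$ (using that reachability non-edges transfer to non-edges of $G$ via monotone paths), or else output (iii) produces a vertex whose forward reachability set still has size $\ge c_{s+1}n$, allowing the path to be extended while staying induced. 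Because all sets involved keep linear size in $n$ (only the constants $c_s$ decay, $k$ times in total), the degree hypothesis $\Delta(G)\le\epsilon n$ is used only once, globally, and never needs to be restored --- which is the structural difficulty your proposal does not overcome.
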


In the inequality $t\geq \alpha(\frac{n}{|X_i|})^{1/2}$, the exponent $1/2$ has no significance: the statement remains true with any $0<\beta<1$ instead of $1/2$ (with the cost of changing $\epsilon$ and $\alpha$). However, it is not true with $\beta=1$, as it would contradict the aforementioned construction of Fox \cite{F06}.
\smallskip

In the rest of this section, we show how Theorem \ref{thm:qEH} implies Theorem \ref{thm:mainthm}. Very similar ideas were used in \cite{BLT15,CSSS18,CSSS18+}. The next two sections are devoted to the proof of Theorem \ref{thm:qEH}.
\smallskip

By a classical result of R\"{o}dl \cite{R86}, any graph $G$ avoiding some fixed graph $H$  contains a linear size subset that is either very dense or very sparse. A quantitatively stronger version of this result was proved by Fox and Sudakov \cite{FS08}.

\begin{lemma}\label{lemma:improveduni}\cite{R86}
	For every graph $H$ and $\epsilon_{0}>0$, there exists $\delta_{0}>0$ with the following property.
	
	For any graph $G$ with $n$ vertices that does not contain $H$ as an induced subgraph, there is a subset $U\subset V(G)$ such that $|U|\geq \delta_{0} n$, and either $|E(G[U])|\leq \epsilon_{0} \binom{|U|}{2}$ or $|E(G[U])|\geq (1-\epsilon_{0}) \binom{|U|}{2}$.
\end{lemma}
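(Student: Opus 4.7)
The plan is to combine Szemer\'edi's Regularity Lemma with the Induced Embedding Lemma and a three-color Ramsey argument on the reduced graph. Set $h:=|V(H)|$ and choose $s:=\max(h,\lceil 4/(3\epsilon_0)\rceil)$. Pick $\eta>0$ small enough compared both to $1/R_3(s)$ (the three-color Ramsey number for $K_s$) and to the threshold required by the Induced Embedding Lemma for $h$ parts with density gap $\epsilon_0/4$.

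First, I would apply Szemer\'edi's Regularity Lemma to $G$ with parameter $\eta$, producing an equipartition $V(G)=V_1\cup\cdots\cup V_m$ with $|V_i|\approx n/m$, where $m$ depends only on $\eta$, and at most $\eta\binom{m}{2}$ pairs fail to be $\eta$-regular. I would then classify each regular pair $(V_i,V_j)$ as \emph{sparse}, \emph{dense}, or \emph{middle}, according as its density lies in $[0,\epsilon_0/4]$, in $[1-\epsilon_0/4,1]$, or in between. By the standard Induced Embedding Lemma, provided $\eta$ is small enough, any $h$ pairwise middle-regular parts contain an induced copy of $H$ for every way of labelling the vertices of $H$ by the parts; since $G$ is induced-$H$-free, the middle subgraph of the reduced graph on $[m]$ must be $K_h$-free.

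To extract the large homogeneous subset, I would use Tur\'an's theorem (or the Caro-Wei bound) to find an independent set $S$ in the irregular subgraph of the reduced graph with $|S|\ge 1/(2\eta)\ge R_3(s)$, so that all pairs inside $S$ are $\eta$-regular. Applying three-color Ramsey (sparse/dense/middle) on $S$ produces a monochromatic $K_s$; since $s\ge h$ and the middle subgraph is $K_h$-free, this clique must be sparse or dense. The union $U$ of the $s$ parts indexed by the clique has $|U|\ge sn/m$ vertices, and a short density calculation using $s\ge 4/(3\epsilon_0)$ shows that $|E(G[U])|\le\epsilon_0\binom{|U|}{2}$ in the sparse case (and symmetrically in the dense case). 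Setting $\delta_0:=s/m$, a constant depending only on $\epsilon_0$ and $h$, then gives the conclusion.

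The main obstacle is quantitative rather than conceptual: the partition size $m$ produced by Szemer\'edi's Regularity Lemma is a tower in $1/\eta$, so this approach yields only a tower-type constant $\delta_0$. The improvement of Fox and Sudakov~\cite{FS08} bypasses the full Regularity Lemma in favor of a weaker but more efficient partition argument, giving substantially better bounds on $\delta_0$.
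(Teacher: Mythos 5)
The paper does not prove Lemma~\ref{lemma:improveduni} at all: it is imported as a black box from R\"odl \cite{R86} (with the quantitative improvement of \cite{FS08} mentioned but not used), so there is no internal proof to compare against. Your sketch is the standard regularity-lemma proof of R\"odl's theorem and is essentially correct: the classification of regular pairs into sparse/middle/dense with thresholds $\epsilon_0/4$, the observation that an induced-$H$-free graph forces the ``middle'' reduced graph (on pairwise regular parts) to be $K_h$-free via the induced embedding lemma, the three-color Ramsey step on a set of pairwise regular parts, and the final density count with $s\geq 4/(3\epsilon_0)$ all go through (in the sparse case the within-part term $s\binom{q}{2}$ is absorbed precisely because $s\geq 4/(3\epsilon_0)$, and the dense case is symmetric). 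Two routine points should be made explicit if this were written out in full: you must invoke the Regularity Lemma with the lower bound on the number of parts at least $1/\eta$, so that the Tur\'an/Caro--Wei bound on the irregular-pair graph really yields an all-regular set $S$ of size at least $1/(2\eta)\geq R_3(s)$; and the induced embedding lemma needs the parts to have size at least a constant depending on $\eta$ and $h$, so small $n$ has to be dispatched trivially by taking $\delta_0$ small enough that a single vertex suffices when $\delta_0 n\leq 1$. Your closing remark is also accurate: this route gives a tower-type $\delta_0$, whereas the Fox--Sudakov argument cited in the paper avoids the full Regularity Lemma and gives $\delta_0$ polynomial-type dependence; for the purposes of Theorem~\ref{thm:mainthm} either version suffices, since $\delta_0$ only enters as a constant factor in the quasi-Erd\H{o}s--Hajnal parameters.
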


Lemma \ref{lemma:improveduni} applies to unordered graphs, but it can be easily extended to ordered graphs, using the following statement.

\begin{lemma} \cite{RW89}\label{lemma:unordered}
	For every ordered graph $H$, there exists an unordered graph $H_{0}$ with the property that introducing any total ordering on $V(H_{0})$, the resulting ordered graph $H_{0}'$ always contains $H$ as an induced ordered subgraph.
\end{lemma}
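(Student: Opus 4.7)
The plan is to prove the lemma by the probabilistic method: for $N$ large enough in terms of $k:=|V(H)|$, I will show that some realisation of the Erd\H{o}s--R\'enyi random graph $G(N,1/2)$ can be taken as $H_0$. Fix any linear ordering $\prec$ of $[N]$, and let $X=X_\prec$ count the $k$-subsets $S\subseteq[N]$ for which the induced ordered graph on $S$ (with the restriction of $\prec$) is isomorphic to $H$. Each $k$-subset matches $H$ with probability $2^{-\binom{k}{2}}$, so
\[
\mu\,:=\,\mathbb{E}[X]\,=\,\binom{N}{k}\,2^{-\binom{k}{2}}\,=\,\Theta(N^k).
\]

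I would then apply Azuma's inequality to the edge-exposure martingale of $X$. Toggling a single edge $e$ only affects $k$-subsets containing $e$, of which there are $\binom{N-2}{k-2}=O(N^{k-2})$, each contributing at most $1$ to the change of $X$; hence the martingale increment associated with $e$ is bounded by $C:=O(N^{k-2})$. Since there are $\binom{N}{2}=O(N^2)$ edges in total, Azuma's inequality yields
\[
\Pr\!\left(X\le \tfrac{\mu}{2}\right)\;\le\;2\exp\!\left(-\frac{(\mu/2)^2}{2\binom{N}{2}C^2}\right)\;\le\;2\exp(-c'N^2)
\]
for some $c'=c'(k)>0$.

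A union bound over the $N!$ linear orderings then gives
\[
\Pr\bigl(\exists\,\prec\colon X_\prec=0\bigr)\;\le\;N!\cdot 2\exp(-c'N^2)\;\le\;\exp\bigl(N\ln N-c'N^2+O(1)\bigr),
\]
which is strictly less than $1$ once $N$ is sufficiently large. Consequently, some realisation of $G(N,1/2)$ has the property that every ordering contains an induced ordered copy of $H$, and that realisation may be taken as $H_0$.

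The main obstacle is securing a tail bound on $\Pr(X=0)$ strong enough to beat $N!$. A naive ``disjoint $k$-blocks'' argument yields only $e^{-cN}$, which is insufficient against $N!\approx e^{N\ln N}$. The edge-exposure martingale plus Azuma boosts this to $e^{-c'N^2}$, essentially because each single edge influences only a polynomially large collection of $k$-subsets, so the fluctuation of $X$ around its mean is much smaller than $\mu$ itself.
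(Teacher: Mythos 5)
Your argument is correct. Note, however, that the paper does not prove Lemma~\ref{lemma:unordered} at all: it is quoted from R\"odl and Winkler \cite{RW89}, so there is no in-paper proof to compare against, and what you have written is a complete, self-contained substitute for that citation. The structure is sound: for a fixed ordering $\prec$ of $[N]$ the count $X_\prec$ has mean $\binom{N}{k}2^{-\binom{k}{2}}=\Theta_k(N^k)$; toggling one edge changes $X_\prec$ by at most $\binom{N-2}{k-2}=O_k(N^{k-2})$, so the bounded-differences (Azuma/McDiarmid) inequality applied to the edge-exposure martingale gives $\Pr(X_\prec\le\mu/2)\le 2e^{-c'(k)N^2}$, and since $N!\le e^{N\ln N}$ the union bound over all orderings leaves a positive probability that every ordering of $G(N,1/2)$ contains an induced ordered copy of $H$. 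You also correctly identified the one genuine pitfall: a first-moment or disjoint-blocks argument only yields failure probability $e^{-\Theta(N)}$ per ordering, which cannot beat $N!$, so the quadratic-in-$N$ exponent from the martingale concentration is exactly what makes the union bound close. The only caveats worth recording are bookkeeping ones: the constant $c'(k)$ is roughly $2^{-2\binom{k}{2}}$ up to factors polynomial in $k$, so the resulting $|V(H_0)|=N$ is exponential in $k^2$, and one should say explicitly that ``isomorphic to $H$'' means agreement under the unique order-preserving bijection of the $k$-subset with $\{1,\dots,k\}$, which is what makes the per-subset probability exactly $2^{-\binom{k}{2}}$; neither point affects correctness.
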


By the combination of these two lemmas, we obtain the following.

\begin{lemma}\label{lemma:uni2}
	For every ordered graph $H$ and $\epsilon>0$, there exists $\delta>0$ with the following property.
	
  For any ordered graph $G$ with $n$ vertices that does not contain $H$ as an induced ordered subgraph, there exists a subset $U\subset V(G)$ such that $|U|\geq \delta n$, and either $\Delta(G[U])\leq \epsilon|U|$ or $\Delta(\overline{G}[U])\leq \epsilon |U|$.
\end{lemma}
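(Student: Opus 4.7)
The plan is to combine Lemma~\ref{lemma:improveduni} with Lemma~\ref{lemma:unordered} by passing through the underlying unordered graph, and then to upgrade the average-degree conclusion of Lemma~\ref{lemma:improveduni} to a maximum-degree conclusion by deleting the vertices of highest degree.

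First, given the ordered graph $H$, invoke Lemma~\ref{lemma:unordered} to obtain an unordered graph $H_0$ such that every total ordering of $V(H_0)$ yields an ordered graph containing $H$ as an induced ordered subgraph. I claim that the unordered graph underlying $G$ does not contain $H_0$ as an induced subgraph: if it did, then the ordering that $G$ induces on this copy of $H_0$ would, by the defining property of $H_0$, contain $H$ as an induced ordered subgraph of $G$, contradicting our hypothesis.

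Next, fix $\epsilon_0 = \epsilon/4$ and apply Lemma~\ref{lemma:improveduni} to the unordered graph underlying $G$ with parameters $H_0$ and $\epsilon_0$. This yields a constant $\delta_0 = \delta_0(H_0, \epsilon_0)$ and a subset $W \subset V(G)$ with $|W| \geq \delta_0 n$ such that either $|E(G[W])| \leq \epsilon_0 \binom{|W|}{2}$ (the sparse case) or $|E(G[W])| \geq (1-\epsilon_0)\binom{|W|}{2}$ (the dense case). Note that passing to the induced ordered subgraph on $W$ preserves the ordering, and the edge count is an unordered invariant, so we may work freely with either.

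In the sparse case, the average degree in $G[W]$ is at most $\epsilon_0 |W|$, so by Markov's inequality at most $|W|/2$ vertices have degree greater than $2\epsilon_0|W|$. Let $U$ consist of the remaining vertices; then $|U| \geq |W|/2 \geq (\delta_0/2) n$ and every vertex of $U$ has degree at most $2\epsilon_0|W| \leq 4\epsilon_0|U| = \epsilon|U|$ in $G[W]$, hence also in $G[U]$. Thus $\Delta(G[U]) \leq \epsilon|U|$. In the dense case, the identical argument applied to $\overline{G}[W]$ (which has at most $\epsilon_0\binom{|W|}{2}$ edges) produces a subset $U \subset W$ with $|U| \geq |W|/2$ and $\Delta(\overline{G}[U]) \leq \epsilon|U|$. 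Setting $\delta = \delta_0/2$ works in either case. There is no genuine obstacle here; the only mild point to keep track of is the factor-of-two loss when truncating to low-degree vertices, which is absorbed into the choice $\epsilon_0 = \epsilon/4$.
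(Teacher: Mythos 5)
Your proof is correct and follows essentially the same route as the paper: pass to the underlying unordered graph via Lemma~\ref{lemma:unordered}, apply Lemma~\ref{lemma:improveduni}, and then delete the high-degree vertices to convert the edge-count bound into a maximum-degree bound. Your choice $\epsilon_0=\epsilon/4$ is in fact slightly more careful than the paper's $\epsilon_0=\epsilon/2$, since the degree bound is measured against the larger set before truncation, so the factor-of-two loss you account for is genuinely needed.
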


\begin{proof}
	By Lemma \ref{lemma:unordered}, there exists a graph $H_{0}$ such that introducing any total ordering on $V(H_{0})$, the resulting ordered graph $H_{0}'$ contains $H$ as an induced ordered subgraph. Let $\epsilon_0=\frac{\epsilon}{2}$, and let $\delta_0$ be the constant given by Lemma \ref{lemma:improveduni} with respect to $H_{0}$ and $\epsilon_0$.
	
	Let $G$ be an ordered graph with $n$ vertices that does not contain $H$ as an induced ordered subgraph. Then the underlying unordered graph of $G$ does not contain $H_{0}$ as an induced subgraph. Hence, there exists $U'\subset V(G)$ such that $|U'|\geq \delta_0 n$, and either $|E(G[U'])|\leq \epsilon_0 \binom{|U'|}{2}$ or $|E(G[U'])|\geq (1-\epsilon_0) \binom{|U'|}{2}$. Suppose that $|E(G[U'])|\leq \epsilon_0 \binom{|U'|}{2}$, the other case can be handled similarly. Let $W$ be the set of vertices in $U'$ whose degree in $G[U]$ is larger than $2\epsilon_{0} |U|$. Then
	$$\frac{1}{2}(2\epsilon_0|W|)|U'|\leq |E(G[U'])|\leq \epsilon_0 \binom{|U'|}{2},$$
so that $|W|\leq \frac{|U'|}{2}$. Setting $U=U'\setminus W$, we have $\Delta(G[U])\leq 2\epsilon_0 |U'|\leq \epsilon |U|$ and
	$$|U|\geq \frac{|U'|}{2}\geq \frac{\delta_0}{2}n.$$
	Hence, $\delta=\frac{\delta_0}{2}$ will suffice.
\end{proof}

After this preparation, it is easy to deduce from Theorem \ref{thm:qEH} that $\mathcal{P}_k$ has the quasi-Erd\H{o}s-Hajnal property and, therefore, the Erd\H{o}s-Hajnal property.

\begin{proof}[Proof of Theorem \ref{thm:mainthm}]
	Let $\epsilon,\alpha>0$ be the constants given by Theorem \ref{thm:qEH}, and let $\delta>0$ be the constant given by Lemma \ref{lemma:uni2}, where $H$ is the monotone path of size $k$.
	
	Let $G$ be an ordered graph on $n$ vertices such that neither $G$ nor its complement contains a monotone path of length $k$ as an induced subgraph.  Then there exists $U\subset V(G)$ such that $|U|\geq \delta n$, and either $\Delta(G[U])<\epsilon |U|$ or $\Delta(\overline{G}[U])<\epsilon |U|$. Suppose that $\Delta(G[U])<\epsilon |U|$, the other case can be handled similarly. Applying Theorem \ref{thm:qEH} to $G[U]$, we obtain that there exist $t\geq 2$ and $t$ disjoint sets $X_1,\dots,X_t\subset U$ such that
	$$t\geq \alpha\left(\frac{|U|}{|X_i|}\right)^{1/2}\geq \alpha \delta^{1/2}\left(\frac{n}{|X_i|}\right)^{1/2}$$ for $i=1,\dots,t$, and there is no edge between $X_i$ and $X_j$ for $1\leq i<j\leq t$.
	
	Thus, the family $\mathcal{P}_k$ has the quasi-Erd\H{o}s-Hajnal property with parameters $\alpha:=\alpha\delta^{1/2}$ and $\beta:=1/2$. Therefore, $\mathcal{P}_k$ also has the Erd\H{o}s-Hajnal property.
\end{proof}

In the next two sections, we present the proof of Theorem \ref{thm:qEH}.

\section{The embedding lemma}\label{sect:monpath2}

The backbone of the proof of Theorem \ref{thm:qEH} is the following technical lemma, whose proof is already contained in \cite{T20}, within the proof Lemma 7. For convenience and to make this paper self-contained, it is also included here.

\begin{lemma}\label{lemma:embedding}
	There exist two constants $\epsilon_1,\alpha_1>0$ with the following property. Let $G$ be a bipartite graph with vertex classes $A$ and $B$, $|A|=|B|=n$. Then at least one of the following three conditions is satisfied.
	\begin{description}
		\item[(i)] There exist $t\geq 2$ and $2t$ disjoint sets $W_1,\dots,W_t\subset A$ and $X_1,\dots,X_t\subset B$ such that $t\geq \alpha_1(\frac{n}{|X_i|})^{1/2}$, and $X_{i}\subset N(W_{i})$ for $i=1,\dots,t$, but $X_{i}\cap N(W_j)=\emptyset$ for $i\neq j$.
		\item[(ii)] There exist $X_1\subset A$ and $X_2\subset B$ such that $2>\alpha_1 (\frac{n}{|X_i|})^{1/2}$ and there is no edge between $X_1$ and $X_2$.
		\item[(iii)] There exists $v\in A$ such that $|N(v)|\geq \epsilon_1 n.$
	\end{description}
\end{lemma}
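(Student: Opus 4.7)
The plan is to prove Lemma~\ref{lemma:embedding} via a greedy construction of the pairs $(W_i, X_i)$, with a fallback to condition (ii) when the greedy halts prematurely. First, if some $v \in A$ has $|N(v)| \geq \epsilon_1 n$, then (iii) holds and we are done; so assume $|N(v)| < \epsilon_1 n$ for every $v \in A$.

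Then I would run the following iterative procedure. Maintain ``still-available'' sets, initially $A_0 = A$ and $B_0 = B$. At step $i$, pick $v_i \in A_{i-1}$ maximizing $|N(v_i) \cap B_{i-1}|$. If this quantity is at least a target $s$ (to be optimized), set $W_i = \{v_i\}$ and $X_i = N(v_i) \cap B_{i-1}$; otherwise halt. Then update
\[
B_i = B_{i-1}\setminus N(v_i), \qquad A_i = A_{i-1}\setminus\bigl(\{v_i\}\cup (N(X_i)\cap A_{i-1})\bigr),
\]
so that future $W_j$'s do not hit $X_i$ and future $X_j$'s do not hit $N(W_i)$. If after $t$ steps we have $t \geq \alpha_1 (n/|X_i|)^{1/2}$, condition~(i) is satisfied. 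Because $\Delta(G\restriction_A)<\epsilon_1 n$, we have $|B_i|\geq n-i\,\epsilon_1 n$; and since the removals $N(X_i)\cap A_{i-1}$ are disjoint across $i$, their total size is at most $n$, so $|A_i|$ stays linear as long as the cumulative $A$-removal is kept bounded away from $n$.

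If the greedy halts at some step $i$ with $A_{i-1},B_{i-1}$ still of linear size, then every $v\in A_{i-1}$ satisfies $|N(v)\cap B_{i-1}|<s$, so the bipartite subgraph between $A_{i-1}$ and $B_{i-1}$ has at most $s\cdot|A_{i-1}|$ edges. Choosing $s$ sufficiently small compared to $n$ makes this residual graph very sparse, and a standard extraction — pick a linear-size random $X_1\subset A_{i-1}$ of size roughly $cn$, then set $X_2=B_{i-1}\setminus N(X_1)$, and use expectation to verify $|X_2|$ is still linear — produces two linear-size subsets with no edges between them, giving condition~(ii) for suitable $\alpha_1$.

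The main obstacle is parameter balancing, and in particular controlling a possible collapse of $A_{i-1}$: a single vertex $x\in X_i$ of very high $A$-degree can blow up $|N(X_i)\cap A_{i-1}|$ in one shot, even though $A$-degrees are bounded (since $x$ lies in $B$, no symmetric hypothesis is available). To address this, before starting the greedy I would pre-filter by deleting from $B$ any vertex $x$ with $|N(x)\cap A|$ above a threshold $\tau n$; by the total edge count, at most $\epsilon_1 n/\tau$ such vertices exist, so their removal is affordable, and afterwards $|N(X_i)\cap A|\leq \tau n\cdot|X_i|$, allowing each step's $A$-loss to be charged cleanly to $|X_i|$. The delicate part is then selecting $s,\tau,\epsilon_1,\alpha_1$ consistently so that the success branch yields (i), the failure branch yields (ii), and both $A_{i-1}$ and $B_{i-1}$ remain of linear size throughout the required number of iterations.
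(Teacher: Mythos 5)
The overall skeleton (either build the family $(W_i,X_i)$, or fall back to a sparse pair for (ii), or find a high-degree vertex for (iii)) matches the paper's trichotomy, but the step you yourself flag as ``delicate'' --- the parameter balancing --- is a genuine gap, and it cannot be closed by tuning $s,\tau,\epsilon_1,\alpha_1$. First, your fallback forces $s$ to be an absolute constant: in the failure branch every $v\in A_{i-1}$ has fewer than $s$ neighbours in $B_{i-1}$, and (ii) needs $|X_1|,|X_2|>\alpha_1^2n/4$; since $|N(X_1)\cap B_{i-1}|<s|X_1|$, keeping $X_2=B_{i-1}\setminus N(X_1)$ of linear size requires $s=O(1)$ (randomizing the choice of $X_1$ does not change this edge-count obstruction). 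But with $s=O(1)$ the success branch must reach $t\geq\alpha_1(n/s)^{1/2}=\Omega(\sqrt n)$ iterations, because the last sets produced have size about $s$. Second, your pre-filter cannot protect $A$ for that long: the only global information is $e(G)\leq\epsilon_1n^2$, so deleting the $B$-vertices of $A$-degree above $\tau n$ removes up to $\epsilon_1n/\tau$ vertices, forcing $\tau\geq 2\epsilon_1$ to be a constant. Hence a single step can delete $\min\{n,\tau n|X_i|\}=\Theta(n)$ vertices of $A$ --- already when $|X_i|$ is a bounded constant whose members have $A$-degree close to $\tau n$, let alone when the greedy's first picks have $|X_i|=\Theta(n)$ and $N(X_i)$ covers essentially all of $A$ (such graphs exist with all $A$-degrees below $\epsilon_1 n$). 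After $O(1)$ iterations $A_{i-1}$ can be essentially empty, so the greedy halts with $t=O(1)\ll\sqrt n$ and the fallback has no linear-size $A_{i-1}$ left. The charging bound ``total $A$-loss $\leq\tau n\sum_i|X_i|\leq \tau n\cdot n$'' exceeds $n$ by a factor $\tau n$, and ``the removals are disjoint, hence total at most $n$'' is exactly the problem: ``at most $n$'' is compatible with $A$ being wiped out after a bounded number of steps.

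This is precisely the difficulty the paper's proof is engineered around, and it is resolved by a different mechanism rather than by parameter choice: $B$ is split into dyadic degree classes $2^{i-1}\leq|N(v)\cap A|<2^i$ and one works inside a single class $V_k$ whose size exceeds the degree-dependent threshold $t_k=n^{1/2}2^{k/2}$; the analogue of your pre-filter is a sub-algorithm that repeatedly deletes the currently heavy $A$-vertices, whose numbers decrease geometrically and total less than $t_k$, so $A$ only loses $o(n)$ vertices overall; and the sets $W_i$ are not singletons but groups of a random sparse selection $S\subset A$ (each vertex kept with probability $2^{-k}$), grouped so that each $X_i$ --- the vertices whose unique $S$-neighbour lies in $W_i$ --- has size at least roughly $|Z_l|^2/n$, which is exactly how the tradeoff $t\gtrsim(n/|X_i|)^{1/2}$ emerges. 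A single fixed threshold with singleton $W_i$'s cannot reproduce this balance, so as written the proposal does not prove the lemma.
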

\begin{proof}
	We show that $\epsilon_1=\frac{1}{2000}$ and $\alpha_1=\frac{1}{200}$ meet the above requirements.
	
	Suppose that (iii) does not hold. Then the number of edges of $H$ is at most $\epsilon_1 n^2$, so the number of vertices $w\in B$ such that $|N(w)|>\epsilon_1 n$ is at most $n/2$. Deleting all such vertices, and some more, we obtain a bipartite graph $H'$ with vertex classes $A'$ and $B'$ of size $n'=n/2$ such that the maximum degree of $H'$ is at most $2\epsilon_1 n=4\epsilon_1 n'$.
	
	Let $\epsilon=4\epsilon_1=\frac{1}{500}$ and $\alpha=\frac{1}{100}$. From now on, we shall only work with $H'$, so with a slight abuse of notation, write $H:=H'$, $A_{0}:=A'$, $B_{0}:=B'$, and $n:=n'$. Therefore, we have $\Delta(H)\leq \epsilon n$.
	
	In what follows, we describe an algorithm, which will be referred to as the \emph{main algorithm}. It will output 
	\begin{description}
		\item[(i)'] either an integer $t\geq 2$ and $2t$ disjoint sets $W_1,\dots,W_t\subset A$ and $X_1,\dots,X_t\subset B$ such that $t\geq \alpha(\frac{n}{|X_i|})^{1/2}$, and $X_{i}\subset N(W_{i})$ for $i=1,\dots,t$, but $X_{i}\cap N(W_j)=\emptyset$ for $i\neq j$;
		\item[(ii)'] or two subsets $X_1\subset A$ and $X_2\subset B$ such that $2>\alpha (\frac{n}{|X_i|})^{1/2}$ and there is no edge between $X_1$ and $X_2$.
	\end{description}
	
	We declare the following constants for the main algorithm. Let $J_{0}=\lfloor \log_2 \epsilon n\rfloor+1$, and for $j=1,\dots,J_0$, let $t_j=n^{1/2}2^{j/2}$. Then \begin{equation}\label{equ1}
	\sum_{i=1}^{J_0}t_i=\sum_{i=1}^{J_0} n^{1/2} 2^{i/2}\leq 2n\epsilon^{1/2}\frac{1}{1-2^{-1/2}}<\frac{n}{4}.
	\end{equation}
	
	Also, declare the following variables. Let $J:=J_{0}$, $A:=A_{0}$,  $B:=B_{0}$, $A^{*}:=\emptyset$ and $B^{*}:=\emptyset$.

	In each step of the main algorithm, we  make the following changes: we move certain elements of $A$ into $A^{*}$, move certain elements of $B$ into $B^{*}$, and decrease $J$. We think of the elements of $A^{*}$ and $B^{*}$ as ``leftovers''. We make sure that at the end of each step of the algorithm, the following properties are satisfied:
	\begin{enumerate}
		\item	$|A|+|A^{*}|=|B|+|B^{*}|=n$,
		\item  $\displaystyle|A^{*}|,|B^{*}|\leq 2\sum_{i=J+1}^{J_{0}} t_i,$
		\item  for every $v\in B$, $|N(v)\cap A|<2^{J}$.
	\end{enumerate}
	Note that by (\ref{equ1}) and conditions 1 and 2, we have $|A|,|B|\geq \frac{n}{2}$. These conditions are certainly satisfied at the beginning of the algorithm. Next, we describe a general step of our main algorithm.
	
	\bigskip
	
	\noindent
	\textbf{Main algorithm.} If $J=0$, then stop the main algorithm, and output $X_1=A, X_2=B$. In this case, there is no edge between $A$ and $B$, by condition 3 and $|A|,|B|\geq \frac{n}{2}$. By the choice of $\alpha$, this output satisfies condition (ii)'.
	
	Suppose next that $J\geq 1$. For $i=1,\dots,J$, let $V_i$ be the set of vertices $v\in B$ such that $2^{i-1}\leq |N(v)\cap A|<2^{i}$, and let $V_{0}$ be the set of vertices $v\in B$ such that $N(v)\cap A=\emptyset$. Then, by condition 3, we have $B=\bigcup_{i=0}^{J} V_{i}$.
	
	Let $k, \; 1\leq k\leq J$ be the largest integer for which  $t_k<|V_k|$. First, consider the case where there is no such $k$. Then $$n-\sum_{i=J+1}^{J_0}t_{i}-|V_0|\leq n-|B^{*}|-|V_0|=|B|-|V_0|=\sum_{i=1}^{J} |V_i|\leq \sum_{i=1}^{J}t_i,$$
where the first inequality follows from condition 2, and the first equality is the consequence of condition 1. Comparing the left-hand and right-hand sides, and using (\ref{equ1}), we get $|V_{0}|\geq n/2$. In this case, stop the algorithm and output $X_1=V_{0}$ and $X_2=A$. Note that $\alpha(\frac{n}{|X_i|})^{1/2}<2$ is satisfied for $i=1,2$, so this output satisfies condition (ii)'.
	
	Suppose that there exists $k$ with the desired property.  Remove the elements of $V_i$ for $i>k$ from $B$, and add them to $B^{*}$. Then we added at most $ \sum_{i=k+1}^{J} t_i$ elements to $B^{*}$. Setting $J:=k$, properties 1-3 are still satisfied.
\smallskip

	Now we shall run a \emph{sub-algorithm}. Let $Z_0=V_k$. With help of the sub-algorithm, we construct a sequence $Z_0\supset\dots\supset Z_r$ satisfying the following properties. During each step of the sub-algorithm, we either find an output satisfying (i)', or we will move certain elements of $A$ to $A^{*}$. At the end of the $l$-th step of this algorithm, $Z_l$ will be the set of vertices in $B$ that still have at least $2^{k-1}$ neighbours in $A$. We stop the algorithm if $Z_l$ is too small.
	
	\begin{description}
		\item[Sub-algorithm.]    Suppose that $Z_{l}$ has already been defined. If $|Z_{l}|<2t_k$, then let $r=l$, stop the sub-algorithm, remove the elements of $Z_l$ from $B$, and add them to $B^{*}$. Make the update $J:=k-1$, and move to the next step of the main algorithm. Note that $B^{*}$ satisfies condition 2. Later, we will see that all the other properties are satisfied.
		
		On the other hand, if $|Z_l|\geq 2t_k$, we define $Z_{l+1}$ as follows. Let $x_l=\frac{|Z_l|}{t_k}$. Say that a vertex $v\in A$ is \emph{heavy} if
		$$|N(v)\cap Z_l|\geq \frac{x_l 2^k}{t_k}|Z_l|=\left(\frac{|Z_l|}{t_k}\right)^{2}2^k=\frac{|Z_l|^{2}}{n}=:\Delta_l,$$ and let $H_l$ be the set of heavy vertices. Counting the number of edges $f$ between $H_l$ and $Z_l$ in two ways, we can write
		$$|H_l|\Delta_l\leq f< |Z_l|2^{k},$$
		which gives $|H_l|<\frac{t_k}{x_l}$. Remove the elements of $H_l$ from $A$ and add them to $A^{*}$. Examine how the degrees of the vertices in $Z_l$ changed, and consider the following two cases:
		
		\begin{description}
			\item[Case 1.] At least $\frac{|Z_l|}{2}$ vertices in $Z_l$ have at least $2^{k-1}$ neighbors in $A$.
			
			Let $T$ be the set of vertices in $Z_l$ that have at least $2^{k-1}$ neighbors in $A$, so $|T|\geq \frac{|Z_l|}{2}$. Pick each element of $A$ with probability $p=2^{-k}$, and let $S$ be the set of selected vertices. We say that $v\in T$ is \emph{good} if $|N(v)\cap S|=1$, and let $Y$ be the set of good vertices. We have
			$$\mathbb{P}(v\mbox{ is good})=|N(v)\cap A|p(1-p)^{|N(v)\cap A|-1}\geq \frac{1}{2}(1-2^{-k})^{2^{k}}\geq \frac{1}{6},$$ so that $\mathbb{E}(|Y|)\geq \frac{|T|}{6}\geq \frac{|Z_l|}{12}$. Therefore, there exists a choice for $S$ such that $|Y|\geq \frac{|Z_l|}{12}$. Let us fix such an $S$. For each $v\in S$, let $Y_v$ be the set of elements $w\in Y$ such that $N(w)\cap S=\{v\}$. Also, note that $$|Y_v|\leq |N(v)\cap Z_l|\leq \min\{\epsilon n,\Delta_l\}=:\Delta_l'.$$ In other words, the sets $Y_v$ for $v\in S$ partition $Y$ into sets of size at most $\Delta_l'$. Here, we have
			$$\frac{|Y|}{\Delta_l'}\geq \frac{|Z_l|}{12\Delta_l'}\geq \max\left\{\frac{n}{12|Z_{l}|},\frac{|Z_l|}{\epsilon n}\right\}.$$
			By the choice of $\epsilon$, the right-hand side is always at least $6$. But then we can partition $S$ into $t\geq \frac{|Y|}{3\Delta_l'}\geq 2$ parts $W_1,\dots,W_t$ such that the sets $X_i=\bigcup_{v\in W_i} Y_{v}$ have size at least $\Delta_l'$ for $i=1,\dots,t$. The resulting sets $X_1,\dots,X_t$ satisfy that
			$$t\geq  \frac{|Y|}{3\Delta_l'}\geq  \frac{n}{36|Z_{l}|}\geq \frac{1}{36}\left(\frac{n}{\Delta_l}\right)^{1/2}\geq \frac{1}{36}\left(\frac{n}{|X_i|}\right)^{1/2}.$$
			Stop the main algorithm, and output $t$ and the $2t$ disjoint sets $W_1,\dots,W_t$ and $X_1,\dots,X_t$. By the choice of $\alpha$, this output satisfies (i)'.
			
			\item[Case 2.] At most $\frac{|Z_l|}{2}$ vertices in $Z_l$ have at least $2^{k-1}$ neighbours in $A$.
			
			In this case, define $Z_{l+1}$ as the set of elements of $Z_l$ with at least $2^{k-1}$ neighbours in $A$ (then $Z_{l+1}$ is the set of all elements in $B$ with at least $2^{k-1}$ neighbours in $A$ as well). Also, move to the next step of the sub-algorithm.
		\end{description}
		
		We need to check that, if the main algorithm is not terminated, then after the sub-algorithm ends, conditions 1-3 are still satisfied. Conditions 1 and 3 are clearly true, and 2 holds for $B^{*}$. It remains to show that 2 holds for $A^{*}$ as well. Note that, as $|Z_{l+1}|\leq \frac{|Z_{l}|}{2}$ for $l=0,\dots,r-1$, and $|Z_{r-1}|\geq 2t_k$, we have $|Z_l|\geq 2^{r-l}t_k$ and $x_l\geq 2^{r-l}$. Compared to the first step of the sub-algorithm, $|A^{*}|$ increased by
		$$\sum_{l=0}^{r-1}|H_l|\leq \sum_{l=0}^{r-1}\frac{t_k}{x_l}\leq \sum_{l=0}^{r-1} \frac{t_k}{ 2^{r-l}}<t_{k}.$$
		Therefore, condition 2 is also satisfied.
	\end{description}
	
	In every step of the main algorithm, $J$ decreases by at least one, so the main algorithm will stop in a finite number of steps. When the algorithm stops, its output will satisfy either (i)' or (ii)'.
\end{proof}

\section{The proof of Theorem \ref{thm:qEH}}\label{sect:monpath3}
	
	Now we are in a position to prove Theorem \ref{thm:qEH}. Let $G$ be an ordered graph. The \emph{transitive closure} of $G$ is the ordered graph $G'$ on the vertex set $V(G)$ in which $x$ and $y$ are connected by an edge if and only if there exists a monotone path in $G$ with endpoints $x$ and $y$.
	
	\begin{proof}[Proof of Theorem \ref{thm:qEH}]
		 Let $\epsilon_1,\alpha_1$ be the constants given by Lemma \ref{lemma:embedding}. Furthermore, define the following constants: $c_1=\frac{\epsilon_1}{2}$, $c_{i+1}=\frac{\epsilon_{1}c_{i}}{4}$ (for $i=1,2,\dots$), $\epsilon=\frac{c_k}{2}$, and $\alpha=\frac{\alpha_1c_k^{1/2}}{2}$.
\smallskip
		
		  Let $G$ be an ordered graph on $n$ vertices such that
		 \begin{enumerate}
		 	\item the maximum degree of $G$ is at most $\epsilon n$,
		 	\item there exist no $t$ and $t$ disjoint subsets $X_1,\dots,X_t\subset V(G)$ such that $t\geq \alpha(\frac{n}{|X_i|})^{1/2}$ and there is no edge between $X_i$ and $X_j$ for $1\leq i<j\leq t$.
		 \end{enumerate}
		  Then, we show that $G$ contains a monotone path of size $k$ as an induced subgraph. In particular, we find $k$ vertices $x_1\prec\dots\prec x_k$ with the following properties. For $s=1,\dots,k$,
		
		  \begin{description}
		  	\item[(a)] $x_1,\dots,x_{s}$ is an induced monotone path.
		  	\item[(b)] Let $$U_s=V(G)\setminus \left(\bigcup_{i=1}^{s-1} N(x_i)\right),$$
		  	let $G_{s}=G[U_s\cup \{x_s\}]$, and let $G_s'$ be the transitive closure of $G_s$. Then the forward degree of $x_s$ in $G_s'$ is at least $c_s n$.
		  \end{description}
		
		First, we find a vertex $x_1$ with the desired properties, that is, if $G'$ is the transitive closure of $G$, then the forward degree of $x_1$ must be at least $c_1n$. Let $A$ be the set of the first $n/2$ elements of $V(G)$, and set $B=V(G)\setminus A$. Also, let $H$ denote the bipartite subgraph of $G'$ with parts $A$ and $B$. By Lemma \ref{lemma:embedding}, at least one of the following three conditions is satisfied.
	
	    \begin{description}
	    	\item[(i)] There exist $t\geq 2$ and $2t$ disjoint sets $W_1,\dots,W_t\subset A$ and $X_1,\dots,X_t\subset B$ such that $$t\geq \alpha_1\left(\frac{|A|}{|X_i|}\right)^{1/2}=2^{-1/2}\alpha_1\left(\frac{n}{|X_i|}\right)^{1/2}\geq \alpha\left(\frac{n}{|X_i|}\right)^{1/2},$$ and $X_{i}\subset N_{H}(W_{i})$ for $i=1,\dots,t$, but $X_{i}\cap N_{H}(W_j)=\emptyset$ for $i\neq j$.
	    	\item[(ii)] There exist $X_1\subset A$ and $X_2\subset B$ such that
	    	$$2>\alpha_1 \left(\frac{|A|}{|X_i|}\right)^{1/2}=2^{-1/2}\alpha_1\left(\frac{n}{|X_i|}\right)^{1/2}\geq \alpha\left(\frac{n}{|X_i|}\right)^{1/2},$$ and there is no edge between $X_1$ and $X_2$.
	    	\item[(iii)] There exists $v\in A$ such that $|N_{H}(v)|\geq \epsilon_1 |A|=c_1 n.$
	    \end{description}

     	As non-edges of $G'$ are also non-edges of $G$, (ii) cannot hold, by property 2 of $G$ (at the beginning of the proof). Suppose that (i) holds. Note that there is no edge between $X_i$ and $X_j$ in $G$, for $1\leq i<j\leq t$. Suppose for contradiction that $x\in X_i$ and $y\in X_j$ are joined by an edge in $G$, for some $x\prec y$. Then there exists $w\in W_i$ such that $wx\in E(G')$, but $wy\not\in E(G')$. This is a contradiction, as this means that there is a monotone path from $w$ to $x$ in $G$, so there is a monotone path from $w$ to $y$ as well. Hence, there is no edge between $X_i$ and $X_j$ for $1\leq i<j\leq t$, which contradicts 2. Therefore, (iii) must hold: there exists a vertex $x_1\in V(G)$ whose forward degree in $G'=G_1'$ is at least $c_1 n$.
     \smallskip
     	
     	Suppose that we have already found $x_1,\dots,x_{s}$ with the desired properties, for some $1\leq s\leq k-1$. Then we define $x_{s+1}$ as follows. Let $X$ be the forward neighbourhood of $x_{s}$ in $G_s$, let $Y$ be the forward neighbourhood of $x_{s}$ in $G_s'$, and let $Z=Y\setminus X$.  As $|X|\leq \epsilon n$ and $|Y|\geq c_s n$, we have $|Z|\geq \frac{c_s}{2}n$. Let $A$ be the set of the first $\frac{|Z|}{2}$ elements of $Z$ with respect to $\prec$, and let $B=Z\setminus A$. A monotone path in $G_s$ is said to be \emph{good} if none of its vertices, with the possible exception of the first one, belongs to $X$.  For every $v\in A$, there exists at least one element $x\in X$ such that $v\in N^{+}_{G_s'}(x)$; assign the largest (with respect to $\prec$) such element $x$ to $v$. Then there is a good monotone path from $x$ to $v$.
\smallskip

        Define a bipartite graph $H$ between $A$ and $B$ as follows. If $v\in A$ and $y\in B$, and $x\in X$ is the vertex assigned to $v$, then join $v$ and $y$ by an edge if there is a good monotone path from $x$ to $y$. Applying Lemma \ref{lemma:embedding} to $H$, we conclude that at least one of the following three statements is true.

         \begin{description}
        	\item[(i)] There exist $t\geq 2$ and $2t$ disjoint sets $W_1,\dots,W_t\subset A$ and $X_1,\dots,X_t\subset B$ such that $$t\geq \alpha_1\left(\frac{|A|}{|X_i|}\right)^{1/2}>\frac{\alpha_1 c_{s}^{1/2}}{2}\left(\frac{n}{|X_i|}\right)^{1/2}\geq \alpha\left(\frac{n}{|X_i|}\right)^{1/2},$$ and $X_{i}\subset N_{H}(W_{i})$ for $i=1,\dots,t$, but $X_{i}\cap N_{H}(W_j)=\emptyset$ for $i\neq j$.
        	\item[(ii)] There exist $X_1\subset A$ and $X_2\subset B$ such that $$2>\alpha_1 \left(\frac{|A|}{|X_i|}\right)^{1/2}>\frac{\alpha_1 c_{s}^{1/2}}{2}\left(\frac{n}{|X_i|}\right)^{1/2}\geq \alpha\left(\frac{n}{|X_i|}\right)^{1/2},$$ and there is no edge between $X_1$ and $X_2$.
        	\item[(iii)] There exists $v\in A$ such that $|N_{H}(v)|\geq \epsilon_1 |A|=\frac{\epsilon_1 c_{s}}{4}n=c_{s+1}n.$
        \end{description}

        Suppose first that (i) holds. Then, as before, we show that there is no edge between $X_i$ and $X_j$ in $G$ for $1\leq i<j\leq t$. Suppose that $u\in X_i$ and $w\in X_j$ are joined by an edge in $G$, for some $u\prec w$. Then there exists $v\in W_i$ such that $vu\in E(H)$, but $vw\not\in E(H)$. Let $x\in X$ be the vertex assigned to $v$. Then we can find a good monotone path from $x$ to $u$. Since $uw$ is an edge of $G$, there is a good monotone path from $x$ to $w$, contradicting the assumption $vw\not\in E(H)$. Therefore, there cannot be any edge between $X_i$ and $X_j$ in $G$, which means that (i) contradicts 2.

       Suppose next that (ii) holds. Again, we can show that there is no edge between $X_1$ and $X_2$ in $G$, contradicting 2. Suppose that $v\in X_1$ and $y\in X_2$ are joined by an edge in $G$, and let $x\in X$ be the vertex assigned to $v$. There is a good monotone path from $x$ to $v$ in $G_s$, so there is a good monotone path from $x$ to $y$, contradicting the assumption that $vy$ is not an edge of $H$.

       Therefore, we can assume that (iii) holds. Let $v\in A$ be a vertex of degree at least $c_{s+1}n$ in $H$, and let $x_{s+1}\in X$ be the vertex assigned to $v$. We show that $x_{s+1}$ satisfies the desired properties. We have $U_{s+1}=U_s\setminus X$, and the forward degree of $x_{s+1}$ in $G_{s+1}'$ is exactly the number of vertices $y$ such that there is a good monotone path from $x_{s+1}$ to $y$. That is, the forward degree of $x_{s+1}$ is at least $|N_{H}(v)|\geq c_{s+1}n$, as required. This completes the proof.
	\end{proof}

\section{The construction---Proof of Theorem \ref{thm:construction}}\label{constructionsection}

In this section, we present our construction for Theorem \ref{thm:construction}. The construction involves expander graphs, which are defined as follows.

Recall that for any graph $H$ and any $U\subset V(H)$, we denote by $N(U)=N_{H}(U)$ the neighborhood of $U$ in $H$.  The \emph{closed neighborhood} of $U$ is defined as $U\cup N_{H}(U)$, and is denoted by $N[U]=N_H[U]$. The graph $H$ is called an {\em $(n,d,\lambda)$-expander} if $H$ is a $d$-regular graph on $n$ vertices, and for every $U\subs V$ satisfying $|U|\le |V|/2$, we have $|N_H[U]|\ge (1+\lambda)|U|$. By a well-known result of Bollob\'as \cite{B88}, a random 3-regular graph on $n$ vertices is a $(n,3,\lambda_{0})$-expander with high probability for some absolute constant $\lambda_{0}>0$. In the rest of this section, we fix such a constant $\lambda_{0}$. For explicit constructions of expander graphs see, e.g., \cite{M94}.

For any positive integer $r$, let $H^{r}$ denote the graph with vertex set $V(H)$ in which two vertices are joined by an edge if there exists a path of length at most $r$ between them in $H$. Here we allow loops, so that in $H^r$ every vertex is joined to itself. We need the following simple property of expander graphs.

 \begin{claim}\label{claim:expander}
 	Let $H$ be an $(n,d,\lambda)$-expander graph and let $r\ge 1$. For any subsets $X,Y\subs V(H)$ such that there is no edge between $X$ and $Y$ in $H^{r}$, we have $|X||Y|\leq n^{2}(1+\lambda)^{-r}$.
 \end{claim}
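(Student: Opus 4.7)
The plan is to translate the hypothesis that there is no edge between $X$ and $Y$ in $H^r$ into a distance condition in $H$: every path in $H$ from a vertex of $X$ to a vertex of $Y$ must have length at least $r+1$. Writing $X_0 = X$ and $X_{i+1} = N_H[X_i]$ for the iterated closed neighborhoods, and defining $Y_i$ analogously, this is equivalent to the disjointness condition
\[ X_i \cap Y_j = \emptyset \quad \text{for all } i,j \geq 0 \text{ with } i+j \leq r, \]
and in particular $|X_i| + |Y_{r-i}| \leq n$ for each $0 \leq i \leq r$.

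Next, I would invoke the $(n,d,\lambda)$-expander property: whenever $|X_i| \leq n/2$, we have $|X_{i+1}| = |N_H[X_i]| \geq (1+\lambda)|X_i|$, and likewise for $Y$. The proof then splits according to whether the iterated neighborhoods of $X$ or $Y$ stay below $n/2$ for the full first $r$ steps. If $|X_r| \leq n/2$, iterating the expansion gives $|X| \leq |X_r|(1+\lambda)^{-r} \leq n(1+\lambda)^{-r}$, which combined with $|Y| \leq n$ yields the bound; the case $|Y_r| \leq n/2$ is symmetric.

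In the remaining case, both sequences exceed $n/2$ by step $r$. Let $a$ and $b$ be the smallest indices with $|X_a| > n/2$ and $|Y_b| > n/2$. The key observation is that $a+b \geq r+1$: otherwise $r-a \geq b$ gives $|Y_{r-a}| \geq |Y_b| > n/2$, contradicting $|X_a| + |Y_{r-a}| \leq n$. Since $|X_i| \leq n/2$ for every $i < a$, iterated expansion yields $|X_a| \geq (1+\lambda)^a |X|$, and using $|X_a| \leq n$ we conclude $|X| \leq n(1+\lambda)^{-a}$; by symmetry $|Y| \leq n(1+\lambda)^{-b}$. Multiplying, $|X||Y| \leq n^2(1+\lambda)^{-(a+b)} \leq n^2(1+\lambda)^{-r}$.

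The main work is really just the bookkeeping in the last case, together with the observation that $X$ and $Y$ must keep each other's $r$-neighborhoods apart; the content is the standard principle that expansion proceeds at rate $1+\lambda$ per step as long as the set occupies at most half the graph.
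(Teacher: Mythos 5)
Your proof is correct and follows essentially the same route as the paper: iterate the closed neighborhoods $X_i$, $Y_j$, observe that $X_i$ and $Y_{r-i}$ must be disjoint, and use the expansion factor $1+\lambda$ for as long as the sets occupy at most half of the vertices. Your bookkeeping via the thresholds $a,b$ with $a+b\ge r+1$ is, if anything, slightly cleaner, since it avoids the paper's final step of absorbing a factor $\tfrac14(1+\lambda)$ using $1+\lambda\le 2$.
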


 \begin{proof}
 	Let $X_i=N_{H^i}[X]$ and $Y_i=N_{H^i}[Y]$ for $i=0,1,\dots,r$. It follows from the definition of expanders that, if $|X_{i}|\leq \frac{n}{2}$, then $$|X|\leq \frac{1}{2}n(1+\lambda)^{-i}.$$ Similarly, if $|Y_{i}|\leq \frac{n}{2}$, then $|Y|\leq \frac{1}{2}n(1+\lambda)^{-i}$. If $X$ and $Y$ are not connected by any edge in $H^{r}$, then $X_i$ and $Y_{r-i}$ must be disjoint for every $i$. Let $\ell$ be the largest number in $\{0,1,\dots,r\}$ such that $|X_{\ell}|\le n/2$.
 	
 	If $\ell=r$, then $|X|<n(1+\lambda)^{-r}$, and hence $|X||Y|\le n^2(1+\lambda)^{-r}$.
 	
 	If $\ell<r$, then $|X_{\ell+1}|>n/2$ and $|Y_{r-\ell-1}|\le n/2$. Therefore, we have ${|Y|\le  n(1+\lambda)^{-(r-\ell-1)}}$.  Using the inequality $1+\lambda\leq 2$, we obtain
 	$$|X||Y| \le \frac{1}{4}n^2(1+\lambda)^{-r+1}\leq n^2(1+\lambda)^{-r}.$$
 \end{proof}

\begin{claim}\label{claim:maxdegexpander}
	For any $d$-regular graph $H$ and $r\ge 1$, we have $\Delta(H^{r})\leq (d+1)^{r}$.
\end{claim}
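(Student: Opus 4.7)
The plan is a direct counting argument based on walks. Fix any vertex $v\in V(H)$; by definition the neighbors of $v$ in $H^{r}$ (together with $v$ itself, since loops are allowed) are precisely those vertices $u$ that can be reached from $v$ by a walk in $H$ of length at most $r$. Hence $\deg_{H^{r}}(v)$ is bounded by the number of distinct endpoints of such walks, which in turn is at most the total number of walks of length at most $r$ starting at $v$.

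Since $H$ is $d$-regular, the number of walks of length exactly $i$ starting at a fixed vertex is exactly $d^{i}$ (each step has $d$ choices). Summing over $i=0,1,\dots,r$ (where $i=0$ contributes $1$ for the trivial walk ending at $v$), the total number of walks is
$$\sum_{i=0}^{r} d^{i}.$$
So it suffices to show $\sum_{i=0}^{r} d^{i}\leq (d+1)^{r}$.

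This last inequality is immediate from the binomial theorem: writing
$$(d+1)^{r}=\sum_{i=0}^{r}\binom{r}{i}d^{i},$$
and noting $\binom{r}{i}\geq 1$ for $0\leq i\leq r$, we obtain $\sum_{i=0}^{r} d^{i}\leq (d+1)^{r}$, which finishes the argument. There is no real obstacle here; the only small subtlety is to remember that loops count towards the degree in $H^{r}$, which is why the length-$0$ walk has to be included in the tally — but this is exactly absorbed in the $1 = d^{0}$ term.
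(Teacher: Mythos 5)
Your proof is correct. The paper dispatches this claim with ``trivial, by induction on $r$'', the intended argument being that the ball of radius $r$ around a vertex grows by a factor of at most $d+1$ per step: $N_{H^{r}}[v]\subseteq N_{H}[N_{H^{r-1}}[v]]$, so $|N_{H^{r}}[v]|\leq (d+1)|N_{H^{r-1}}[v]|\leq (d+1)^{r}$ directly. You instead count walks of each length separately, getting the bound $\sum_{i=0}^{r} d^{i}$, and then need one extra step (the binomial theorem, using $\binom{r}{i}\geq 1$) to compare this sum with $(d+1)^{r}$. Both arguments are elementary and complete; your identification of path-reachability within distance $r$ with walk-reachability is fine (a walk of length at most $r$ contains a path of length at most $r$ between the same endpoints), and your handling of the loop via the length-$0$ walk is harmless since it only enlarges the quantity you bound. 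The inductive ball-growth argument is marginally cleaner in that it produces $(d+1)^{r}$ in one stroke, whereas your route yields the slightly stronger intermediate bound $\sum_{i=0}^{r}d^{i}$ at the cost of an extra comparison; for the purposes of Lemma~\ref{lemma:construction} either bound is equally serviceable.
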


\begin{proof}
  	Trivial, by induction on $r$.
\end{proof}

Our construction is based on the following key lemma.

\begin{lemma}\label{lemma:construction}
	 Let $k,m,f$ be positive integers. Let $A_{1},\dots,A_{k}$ be disjoint sets of size $m$, and
suppose that there exists an $(m,3,\lambda_{0})$-expander.

Then there is a graph $G$ on the vertex set $V=\bigcup_{i=1}^{k}A_{i}$ such that
	\begin{enumerate}
		\item $\Delta(G)\leq 4^{f2^{k}}$,
		\item there are no three vertices $x,y,z\in V$ such that $x\in A_{a}, y\in A_{b}, z\in A_{c}$ for some $a<b<c$, and $xy,xz\in E(G)$, but $yz\not\in E(G)$,
		\item for any $a\neq b$ and any pair of subsets $X\subset A_{a}$ and $Y\subset A_{b}$ not connected by any edge of $G$, we have $|X||Y|\leq {m^{2}}{(1+\lambda_{0})^{-f}}$.
	\end{enumerate}
\end{lemma}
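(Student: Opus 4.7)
The plan is the following direct construction. Let $H$ be an $(m,3,\lambda_0)$-expander on vertex set $V(H)$, guaranteed by hypothesis, and fix arbitrary bijections $\phi_i\colon A_i\to V(H)$ for $i=1,\dots,k$. Define $r_{ij}:=2^{i-1}f$ for $1\le i<j\le k$, and let $G$ be the graph on $V=\bigcup_i A_i$ with no edges inside any $A_i$ and, for $i<j$, an edge between $u\in A_i$ and $v\in A_j$ exactly when $\phi_i(u)\phi_j(v)\in E(H^{r_{ij}})$ (using the loop convention from the paragraph above Claim~\ref{claim:expander}).

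The guiding idea is that condition 2 translates into a triangle inequality in $H$. If $x\in A_a$, $y\in A_b$, $z\in A_c$ with $a<b<c$ and $xy,xz\in E(G)$, then $d_H(\phi_a(x),\phi_b(y))\le r_{ab}$ and $d_H(\phi_a(x),\phi_c(z))\le r_{ac}$, whence $d_H(\phi_b(y),\phi_c(z))\le r_{ab}+r_{ac}$. So it is enough to choose the exponents to satisfy $r_{ij}\ge f$ for all $i<j$ and $r_{bc}\ge r_{ab}+r_{ac}$ for all $a<b<c$. The choice $r_{ij}=2^{i-1}f$ works: $r_{ab}+r_{ac}=2\cdot 2^{a-1}f=2^a f\le 2^{b-1}f=r_{bc}$ since $b>a$, and $r_{ij}\ge f$ holds trivially. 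Consequently $\phi_b(y)\phi_c(z)\in E(H^{r_{bc}})$, forcing $yz\in E(G)$ and ruling out the forbidden configuration.

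The remaining two conditions are then immediate from the two expander claims. For condition 3, if $X\subset A_a$ and $Y\subset A_b$ (say $a<b$) have no edge between them in $G$, then $\phi_a(X)$ and $\phi_b(Y)$ have no edge in $H^{r_{ab}}$, so Claim~\ref{claim:expander} gives $|X||Y|\le m^2(1+\lambda_0)^{-r_{ab}}\le m^2(1+\lambda_0)^{-f}$. For condition 1, Claim~\ref{claim:maxdegexpander} bounds the number of neighbours of any $u\in A_i$ in a single other class $A_j$ by $4^{r_{ij}}+1\le 4^{2^{k-2}f}+1$ (the $+1$ comes from the loop case $\phi_i(u)=\phi_j(v)$); summing over the at most $k-1$ other classes yields $\Delta(G)\le k\bigl(4^{2^{k-2}f}+1\bigr)\le 4^{f 2^k}$, with the exponential slack between $2^{k-2}$ and $2^k$ comfortably absorbing the factor $k$.

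The only genuine issue is pinning down the right sequence $r_{ij}$. The constraints $r_{ij}\ge f$ and $r_{bc}\ge r_{ab}+r_{ac}$ force $r_{ij}$ to at least double each time the smaller index increases, giving $r_{k-1,k}\ge 2^{k-2}f$; this is exactly why the lemma has to allow the doubly exponential degree bound $4^{f 2^k}$ rather than something smaller.
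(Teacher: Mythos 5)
Your construction is exactly the one in the paper: thresholds $f2^{a-1}$ depending only on the smaller class index, edges defined via short paths in the expander, property 2 via concatenating paths, property 3 via Claim~\ref{claim:expander}, and the degree bound via Claim~\ref{claim:maxdegexpander}. The argument is correct and takes essentially the same approach as the paper's proof.
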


\begin{proof}
	Let $H$ be an $(m,3,\lambda_{0})$-expander. Let $\phi:V\rightarrow V(H)$ be an arbitrary function such that $\phi$ is a bijection when restricted to the set $A_{i}$, for $i=1,\dots,k$. Define the graph $G$, as follows. Suppose that $x\in A_{a}$ and $y\in A_{b}$ for some $a<b$. Join $x$ and $y$ by an edge if there exists a path of length at most $f2^{a-1}$ between $\phi(x)$ and $\phi(y)$ in $H$. By Claim \ref{claim:maxdegexpander}, the maximum degree of $G$ is at most $\sum_{i=1}^{k-1}4^{f2^{i}}\leq 4^{f2^{k}}$, so that $G$ has property 1.
	
	To see that $G$ also has property 2, consider $x\in A_{a},\; y\in A_{b},\; z\in A_{c}$ such that $a<b<c$ and $xy, xz\in E(G)$. We have to show that $yz\in E(G)$. By definition, there exists a path of length at most $f2^{a-1}$ between $\phi(x)$ and $\phi(y)$ in $H$, and there exists a path of length at most $f2^{a-1}$ between $\phi(x)$ and $\phi(z)$. But then there exists a path of length at most $f2^{a}\leq f2^{b-1}$ between $\phi(y)$ and $\phi(z)$, so $yz$ is also an edge of $G$.
	
	It remains to verify that $G$ has property 3. If $1\leq a< b\leq k$ and $X\subset A_{a}$ and $Y\subset A_{b}$ are not connected by any edge in $G$, then there is no edge between $\phi(X)$ and $\phi(Y)$ in $H^{f2^{a-1}}$. By Claim \ref{claim:expander}, we have $|X||Y|\leq m^{2}(1+\lambda_{0})^{-f2^{a-1}}\leq m^{2}(1+\lambda_{0})^{-f}.$
	\end{proof}

Now we are in a position to prove Theorem \ref{thm:construction}.

\begin{proof}[Proof of Theorem \ref{thm:construction}]
	Let $k= \frac{2}{\epsilon}$, $f= \frac{\log_{2} n}{4\cdot 2^{k}}$, and $m=\frac{n}{k}$. We show that the theorem holds with $\delta=\frac{\log_{2}(1+\lambda_{0})}{2^{k}}$.
	
	Let $A_{1},\dots,A_{k}$ be disjoint sets of size $m$. By Lemma \ref{lemma:construction}, there exists a graph $G_0$ on $V=\bigcup_{i=1}^{m}A_{i}$ satisfying conditions 1-3 with the above parameters.

	Define the ordered graph $G$ on the vertex set $V$ as follows. Let $\prec$ be any ordering on $V$ satisfying $A_{1}\prec\dots \prec A_{k}$. For any $x\in A_{a}$ and $y\in A_{b}$, join $x$ and $y$ by an edge of $G$ if either $a\neq b$ and $xy\in E(G_0)$, or $a=b$. Then the maximum degree of $G$ is at most $\frac{n}{k}+\Delta(G_0)\leq \epsilon n$. Notice that the complement of $G$ does not contain a bi-clique of size $n^{1-\delta}$. Indeed, if $(X,Y)$ is a bi-clique in $\overline{G}$, then there exists $a\neq b$ such that $|X\cap A_{a}|\geq \frac{|X|}{k}$ and $|Y\cap A_{b}|\geq \frac{|Y|}{k}=\frac{|X|}{k}$. Thus, $\frac{|X|^{2}}{k^{2}}\leq \frac{m^{2}}{n^{2\delta}}$, which implies that $|X|\leq n^{1-\delta}$.
	
	It remains to show that $G$ contains neither $S$, nor $P$ as an induced ordered subgraph. Let us start with $S$. Suppose that there are four vertices, $v_{0}\prec v_{1}\prec v_{2}\prec v_{3}$, in $G$ such that $v_{0}v_{1},v_{0}v_{2},v_{0}v_{3}\in E(G)$, but $v_{1}v_{2},v_{2}v_{3},v_{1}v_{3}\not\in E(G)$. Let $v_{0}\in A_{a}$, $v_{1}\in A_{b}$, $v_{2}\in A_{c}$, and $v_{3}\in A_{d}$, then $a\leq b\leq c\leq d$. If $c=a$, then $b=a$, which implies $v_{1}v_{2}\in E(G)$, contradiction. Therefore, $a<c\leq d$. As $v_{2}v_{3}\not\in E(G)$, we must have $c<d$ as well. But then the three vertices $v_{0},v_{2},v_{3}$ contradict property 2, so that $G$ does not contain $S$ an induced ordered subgraph.
	
	To show that $G$ does not contain $P$, we can proceed in a similar manner. Suppose for contradiction that there are four vertices, $v_{0}\prec v_{1}\prec v_{2}\prec v_{3}$, in $G$ such that $v_{0}v_{2},v_{0}v_{3},v_{1}v_{2}\in E(G)$, but $v_{0}v_{1},v_{1}v_{3},v_{2}v_{3}\not\in E(G)$. Let $v_{0}\in A_{a}$, $v_{1}\in A_{b}$, $v_{2}\in A_{c},$ and $v_{3}\in A_{d}$, where $a\leq b\leq c\leq d$. We have $a<b$, otherwise $v_{0}v_{1}\in E(G)$. In the same way, $c<d$, otherwise $v_{2}v_{3}\in E(G)$. Therefore, $a<c<d$, and the vertices, $v_{0},v_{2},$ and $v_{3}$, contradict condition 2 of Lemma \ref{lemma:construction}.
\end{proof}

\end{document}